\newtheorem{theorem}{Theorem}
\newtheorem{proposition}[theorem]{Proposition}
\newtheorem{corollary}[theorem]{Corollary}
\newtheorem{definition}[theorem]{Definition}
\newtheorem{claim}{Claim}[theorem]
\newcommand{\LD}{\gamma_L}
\DeclareMathOperator{\SEP}{loc}
\newcommand{\claimproof}{\noindent\emph{Proof of claim.} }
\newcommand{\smallqed}{{\tiny ($\Box$)}}
\begin{document}

\title{Domination and location in twin-free digraphs}
\author{Florent Foucaud\footnote{\noindent Univ. Bordeaux, Bordeaux INP, CNRS, LaBRI, UMR5800, F-33400 Talence, France. E-mail: florent.foucaud@gmail.com}
  \and Shahrzad Heydarshahi\footnote{\noindent Univ. Orl\'eans, INSA Centre Val de Loire, LIFO EA 4022, F-45067 Orl\'eans Cedex 2, France. E-mail: shahrzad.heydarshahi@univ-orleans.fr}
\and Aline Parreau\footnote{\noindent Univ Lyon, Universit\'e Claude Bernard, CNRS, LIRIS - UMR 5205, F69622, France. E-mail: aline.parreau@univ-lyon1.fr}
}

\maketitle

\begin{abstract}
  A dominating set $D$ in a digraph is a set of vertices such that every vertex is either in $D$ or has an in-neighbour in $D$. A dominating set $D$ of a digraph is locating-dominating if every vertex not in $D$ has a unique set of in-neighbours within $D$. The location-domination number $\LD(G)$ of a digraph $G$ is the smallest size of a locating-dominating set of $G$. We investigate upper bounds on $\LD(G)$ in terms of the order of $G$. We characterize those digraphs with location-domination number equal to the order or the order minus one. Such digraphs always have many twins: vertices with the same (open or closed) in-neighbourhoods. Thus, we investigate the value of $\LD(G)$ in the absence of twins and give a general method for constructing small locating-dominating sets by the means of special dominating sets. In this way, we show that for every twin-free digraph $G$ of order $n$, $\LD(G)\leq\frac{4n}{5}+1$ holds, and there exist twin-free digraphs $G$ with $\LD(G)=\frac{2(n-2)}{3}$. Improved bounds are proved for certain special cases. In particular, if $G$ is twin-free and a tournament, or twin-free and acyclic, we prove $\LD(G)\leq\lceil\frac{n}{2}\rceil$, which is tight in both cases.
\end{abstract}

\section{Introduction}

A \emph{dominating set} in a digraph $G$ is a set $D$ of vertices of $G$ such every vertex not in $D$ has an in-neighbour in $D$. The \emph{domination number} $\gamma(G)$ of $G$ is the smallest size of a dominating set of $G$. The area of domination is one of the main topics in graph theory: see the two classic books~\cite{book,book2} on the subject. While there are hundreds of papers on domination in undirected graphs,\footnote{A set of vertices of an undirected graph $G$ is a dominating set if it is a dominating set of the digraph obtained from $G$ by replacing each edge by two symmetric arcs.} domination in digraphs is less studied; we refer to the papers~\cite{fu,lee,meggidovishkin} for some examples. One particular variation of domination is the concept of location-domination, introduced by Slater for undirected graphs in~\cite{slater} (see also~\cite{rs,s2}). For a set $S$ of vertices of a digraph $G$, two vertices $x$ and $y$ of $V(G)\setminus S$ are {\em located} by $S$ if there is a vertex of $S$ that is an in-neighbour of exactly one vertex among $x$ and $y$. The set $S$ is a {\em locating set} of $G$ if it locates all the pairs of $V(G)\setminus S$ (but does not necessarily dominate the graph). Equivalently, every vertex not in $S$ has a distinct set of in-neighbours in $S$. The {\em location number} of a graph $G$ is the size of a smallest locating set and is denoted by $\SEP(G)$. A set $D$ of vertices of a digraph $G$ is \emph{locating-dominating} if it is both locating and dominating. The \emph{location-domination number} $\LD(G)$ of a digraph $G$ is the smallest size of a locating-dominating set of $G$. Note that $\LD(G)-1\leq \SEP(G)\leq \LD(G)$, since at most one vertex is not dominated in a locating set.

Our goal is to investigate bounds on the location-domination number of digraphs. Such bounds are absent from the literature; in fact, the only paper on location-domination in digraphs we are aware of is~\cite{CHL02}, which deals with the computational complexity of the problem. Such bounds on digraphs have been studied for the closely related concept of identifying codes in~\cite{IDcodes}.

We now introduce our terminology. We will assume that all the considered digraphs are loopless and have no multiple arcs. A digraph $G$ contains a set $V(G)$ of vertices and a set $A(G)$ of arcs, that are ordered pairs of vertices. If there is an arc from $v$ to $w$, we say that $v$ is an \emph{in-neighbour} of $w$, and that $w$ is an \emph{out-neighbour} of $v$. The \emph{open in-neighbourhood} and the \emph{open out-neighbourhood} of a vertex $v$, denoted by $N^{-}(v)$ and $N^{+}(v)$, respectively, are the set of in-neighbours of $v$ and the set of out-neighbours of $v$, respectively. Further, the \emph{closed in-neigbourhood} of $v$ is $N^{-}[v] = N^{-}(v) \cup \lbrace v \rbrace$ and the \emph{closed out-neigbourhood} of $v$ is $N^{+}[v] = N^{+}(v) \cup \lbrace v \rbrace$. A \emph{source} is a vertex with no in-neighbours, and a \emph{sink} is a vertex with no out-neighbours. Two vertices are called \emph{twins} if they have the same open in-neighbourhood or the same closed in-neighbourhood. If two vertices $x$ and $y$ satisfy $N^-(x)=N^-(y)\cup\{y\}$, they are called \emph{quasi-twins}. See Figure~\ref{fig:twins} for an illustration. A \emph{directed path} is a sequence of vertices where each vertex has an arc to the next vertex in the sequence. A \emph{directed cycle} is a directed path where the first and the last vertices are the same. An \emph{acyclic digraph} is a digraph with no directed cyle. A \emph{tournament} is a digraph in which there is a unique arc between any pair of vertices. A tournament is \emph{transitive} if it is acyclic. A digraph is called \emph{strongly connected} if there exists a directed path in both directions between every pair of vertices.

\begin{figure}[htpb!]
  \centering
  \begin{tikzpicture}
    \begin{scope}
      \path (1,0) node[draw,shape=circle,fill=gray!50] (b) {$b$};
      \path (-1,0) node[draw,shape=circle,fill=gray!50] (a) {$a$};
      \path (0,-2) node[draw,shape=circle] (d) {};
      \path (1.5,-2) node[draw,shape=circle] (e) {};
      \path (-1.5,-2) node[draw,shape=circle] (c) {};

      \draw[line width=0.4mm,>=latex,->] (c) -- (a);
      \draw[line width=0.4mm,>=latex,->] (d) -- (a);
      \draw[line width=0.4mm,>=latex,->](e)--(a) ;
      \draw[line width=0.4mm,>=latex,->](c) -- (b);
      \draw[line width=0.4mm,>=latex,->](d) -- (b);
      \draw[line width=0.4mm,>=latex,->](e) -- (b);
      
      \path (0,-3) node {(a) $a$ and $b$ are open twins.};
      
    \end{scope}

    \begin{scope}[xshift=5cm]
      \path (1,0) node[draw,shape=circle,fill=gray!50] (b) {$b$};
      \path (-1,0) node[draw,shape=circle,fill=gray!50] (a) {$a$};
      \path (0,-2) node[draw,shape=circle] (d) {};
      \path (1.5,-2) node[draw,shape=circle] (e) {};
      \path (-1.5,-2) node[draw,shape=circle] (c) {};
            
      \draw[line width=0.4mm,>=latex,->] (a) -- (b);
      \draw[line width=0.4mm,>=latex,->] (c) -- (a);
      \draw[line width=0.4mm,>=latex,->] (d) -- (a);
      \draw[line width=0.4mm,>=latex,->](e)--(a) ;
      \draw[line width=0.4mm,>=latex,->](c) -- (b);
      \draw[line width=0.4mm,>=latex,->](d) -- (b);
      \draw[line width=0.4mm,>=latex,->](e) -- (b);
      \path (0,-3) node {(b) $a$ and $b$ are quasi-twins.};    
    \end{scope}
    
    \begin{scope}[xshift=10cm]
      \path (1,0) node[draw,shape=circle,fill=gray!50] (b) {$b$};
      \path (-1,0) node[draw,shape=circle,fill=gray!50] (a) {$a$};
      \path (0,-2) node[draw,shape=circle] (d) {};
      \path (1.5,-2) node[draw,shape=circle] (e) {};
      \path (-1.5,-2) node[draw,shape=circle] (c) {};    
      
      \draw[line width=0.4mm,>=latex,->] (a) to[bend right=10] (b);
      \draw[line width=0.4mm,>=latex,->] (c) -- (a);
      \draw[line width=0.4mm,>=latex,->] (d) -- (a);
      \draw[line width=0.4mm,>=latex,->] (e) -- (a) ;
      \draw[line width=0.4mm,>=latex,->] (b) to[bend right=10] (a);
      \draw[line width=0.4mm,>=latex,->] (c) -- (b);
      \draw[line width=0.4mm,>=latex,->] (d) -- (b);
      \draw[line width=0.4mm,>=latex,->] (e) -- (b);
      
      \path (0,-3) node {(c) $a$ and $b$ are closed twins.};
    \end{scope}
  \end{tikzpicture}
  \caption{Examples of twin vertices.\label{fig:twins}}
  \end{figure}
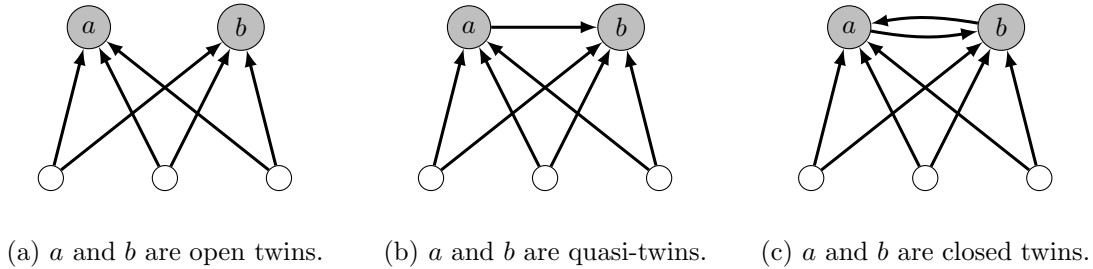

Perhaps the most classic result in domination of undirected graphs is the theorem of Ore~\cite{ore} which states that any undirected graph without isolated vertices has a dominating set of size at most half the order. Such a theorem does not hold for the location-domination number of undirected graphs, for example complete graphs and stars of order $n$ have location-domination number $n-1$, see~\cite{slater}. Nevertheless, Garijo, Gonz\'alez and M\'arquez have conjectured in~\cite{Garijo} that in the absence of twins, the upper bound of Ore's theorem also holds for the location-domination number of undirected graphs; they also proved that an upper bound of roughly two thirds the order holds in this context (see~\cite{linegraph,henning,heia} for further developments on this matter).

Therefore, it is natural to ask whether similar bounds exist in the case of locating-dominating sets of digraphs. However, Ore's theorem does not hold for digraphs. Indeed, not only every isolated vertex but also every source of a digraph $G$ belongs to any dominating set of $G$. For example, a star of order $n$ with $n-1$ sources has domination number $n-1$ (it is easy to see that this is the only digraph of order $n$ with no isolated vertices and domination number $n-1$, see~\cite{Shahrzad}). Lee showed (as part of a more general result) in~\cite{lee} that every source-free digraph $G$ of order $n$ has domination number at most $\lceil\frac{2n}{3}\rceil$. Moreover, the digraph of order $n=3k$ consisting of $k$ vertex-disjoint directed triangles is source-free and has domination number $\frac{2n}{3}$, so this bound is tight when $n=0\bmod 3$. Better bounds have been obtained for specific classes: every tournament of order $n$ has domination number at most $\lceil\log_2 n\rceil$~\cite{meggidovishkin} (the order is tight for random tournaments), and every strongly connected digraph of order $n$ has domination number at most $\lceil n/2\rceil$~\cite{leethesis} (this is tight for directed cycles).

What happens for the location-domination number? A first question of interest is to determine which digraphs have largest possible location-domination number. We address this question in Section~\ref{sec:charact-n-1}, where we describe the class of graphs of order $n$ with location-domination number $n-1$. Of course it includes stars with $n-1$ sources (that have domination number $n-1$), bidirected complete graphs and bidirected stars (that correspond to undirected graphs with location-domination number $n-1$), but as we will see, there are many more examples.

In Section~\ref{sec:general-method}, we devise a general technique to obtain small locating-dominating sets in twin-free digraphs. This technique is a refinement of those used in~\cite{heia,Garijo,Shahrzad}. We use this technique in Section~\ref{sec:generalbound} to show that every source-free and twin-free digraph of order $n$ has a locating-dominating set of size at most $\frac{4n}{5}$. This bound is improved to $\frac{3n}{4}$ if moreover the digraph has no quasi-twins. By adding one to these bounds, they also hold even in the presence of sources. In Section~\ref{sec:prelim}, we describe strongly connected twin-free and quasi-twin-free digraphs of order $n$ with location-domination number $\frac{2(n-2)}{3}$.

We then show in Section~\ref{sec:tournaments} that any tournament of order $n$ has a locating-dominating set of size at most $\lceil\frac{n}{2}\rceil$ (this is tight for transitive tournaments and other examples).

In Section~\ref{sec:acyclic}, we show that the same bound holds for twin-free acyclic digraphs (this is also tight, for directed paths).

We address some preliminary considerations in Section~\ref{sec:prelim} and conclude the paper in Section~\ref{sec:conclu}.

\section{Preliminaries}\label{sec:prelim}

We start with some useful propositions.

The following propositon generalizes a well-known fact about locating-dominating sets of undirected graphs~\cite{slater}.

\begin{proposition}\label{ldtwin-digraphs}
Let $G$ be a digraph with a set $S$ of pairwise twins (open or closed) or quasi-twins. There are at least $|S|-1$ vertices of $S$ in any locating-dominating set of $G$.
\end{proposition}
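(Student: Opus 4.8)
The plan is to show that if $D$ is a locating-dominating set missing two or more vertices of $S$, we reach a contradiction. Suppose $x, y \in S \setminus D$ with $x \neq y$. Since $D$ is locating, $x$ and $y$ must be located by $D$, i.e. $N^-(x) \cap D \neq N^-(y) \cap D$. The key observation is that the three types of relationships in $S$ (open twins, closed twins, quasi-twins) all force $N^-(x)$ and $N^-(y)$ to differ only possibly on the vertices of $S$ itself, so the discrepancy witnessing location must come from a vertex of $S \cap D$ adjacent to exactly one of $x, y$.

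\emph{First} I would handle the case where $S$ is a set of pairwise open twins: then $N^-(x) = N^-(y)$ exactly, so $N^-(x) \cap D = N^-(y) \cap D$ regardless of $D$, contradicting that $D$ locates $x$ and $y$. \emph{Second}, for pairwise closed twins, $N^-[x] = N^-[y]$, which means $N^-(x) \setminus \{x,y\} = N^-(y) \setminus \{x,y\}$ and moreover $y \in N^-(x)$, $x \in N^-(y)$ (each is an in-neighbour of the other). Since $x, y \notin D$, intersecting with $D$ kills the only possible difference, so again $N^-(x) \cap D = N^-(y) \cap D$, a contradiction. \emph{Third}, for quasi-twins, by definition (after possibly relabelling within $S$; here I should be careful, as ``quasi-twin'' is a directed relation) there is an ordering so that $N^-(x) = N^-(y) \cup \{y\}$ or $N^-(y) = N^-(x) \cup \{x\}$; in either case the symmetric difference of the open in-neighbourhoods is $\{x\}$ or $\{y\}$, both of which lie outside $D$, so once more the intersections with $D$ coincide — contradiction. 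In all cases $D$ fails to locate $x$ and $y$.

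A subtlety I need to address is the phrase ``a set $S$ of pairwise twins (open or closed) or quasi-twins'': I will argue that within such a set, for \emph{any} two vertices $x,y \in S$, the symmetric difference $N^-(x) \bigtriangleup N^-(y)$ is contained in $\{x,y\}$. This is immediate for open and closed twins; for quasi-twins it follows from the defining equation $N^-(x) = N^-(y) \cup \{y\}$ (noting $y \notin N^-(y)$ since the digraph is loopless, so indeed $N^-(x) \bigtriangleup N^-(y) = \{y\} \subseteq \{x,y\}$). If $S$ mixes these relationships among its pairs, I would check that this containment still holds pairwise — which is really the content of the hypothesis, so I may simply take ``pairwise twins or quasi-twins'' to mean exactly that every pair satisfies one of these three conditions, hence the symmetric-difference bound. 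Then: if $x, y \in S \setminus D$, both lie outside $D$, so $N^-(x) \cap D = N^-(y) \cap D$, contradicting that $D$ is locating. Hence at most one vertex of $S$ lies outside $D$, i.e. at least $|S| - 1$ vertices of $S$ belong to $D$.

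The main obstacle is purely bookkeeping: making sure the asymmetry of the quasi-twin relation is handled cleanly when $S$ contains several quasi-twin pairs, and confirming that the loopless hypothesis is what guarantees $x \notin N^-(x)$ so that the symmetric difference is genuinely confined to $\{x,y\}$. Once the symmetric-difference containment is established uniformly, the contradiction is a one-line consequence of the definition of a locating set. I do not expect to need the dominating property of $D$ at all here — only the locating property — which matches the remark in the introduction that $\LD(G) - 1 \leq \SEP(G)$.
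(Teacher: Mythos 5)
Your proposal is correct and follows essentially the same argument as the paper: the paper's proof observes that two twins or quasi-twins $x,y$ outside $D$ have the same in-neighbourhood in $V(G)\setminus\{x,y\}$ and hence in $D$, which is exactly your symmetric-difference observation $N^-(x)\bigtriangleup N^-(y)\subseteq\{x,y\}$. Your more explicit case analysis (and the remark that only the locating property is used) is a faithful elaboration of the same idea.
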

\begin{proof}
By contradiction, let $D$ be a locating-dominating set that does not contain two mutual twins or quasi-twins $x$ and $y$. The vertices $x$ and $y$ have the same in-neighbourhood in $V(G)\setminus\{x,y\}$, and also in $D$. This is a contradiction.
\end{proof}

However, we note that, unlike twins, there cannot exist a set of three pairwise quasi-twins.

\begin{proposition}\label{quasitwins}
Let $x,y,z$ be three vertices. If $x,y$ are quasi-twins and $y,z$ are quasi-twins, then $x,z$ cannot be quasi-twins.
\end{proposition}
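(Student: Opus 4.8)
The plan is to translate the quasi-twin relation into a statement purely about in-degrees. First I would record the following elementary observation: if two vertices $u$ and $v$ are quasi-twins, then $|N^-(u)|$ and $|N^-(v)|$ differ by exactly $1$. Indeed, say the pair satisfies $N^-(u)=N^-(v)\cup\{v\}$; since the digraph is loopless we have $v\notin N^-(v)$, so $|N^-(u)|=|N^-(v)|+1$. The contrapositive is what I will use: two vertices whose in-neighbourhoods have equal size, or sizes differing by $2$, cannot be quasi-twins.

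With this in hand the proposition follows from a one-line parity count. Assume $x,y$ are quasi-twins and $y,z$ are quasi-twins; we may take $x,y,z$ pairwise distinct, since a loopless digraph has no vertex that is a quasi-twin of itself, so if two of them coincide there is nothing to prove. By the observation, $|\,|N^-(x)|-|N^-(y)|\,|=1$ and $|\,|N^-(y)|-|N^-(z)|\,|=1$, hence $|N^-(x)|-|N^-(z)|$ is a sum of two terms each equal to $+1$ or $-1$; therefore it lies in $\{-2,0,2\}$ and in particular is not $\pm 1$. Applying the observation once more, $x$ and $z$ are not quasi-twins, as desired.

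I expect no real obstacle here: the only thing one needs to see is that the looplessness hypothesis forces the quasi-twin relation to shift the in-degree by \emph{exactly} one (in a well-defined direction), so iterating it twice produces a parity mismatch with the quasi-twin condition itself. One could instead run an explicit case analysis on the two possible orientations of each quasi-twin pair — four cases, in two of which the in-neighbourhoods of $x$ and $z$ coincide and in two of which they differ by two elements — but the in-degree argument packages all four cases at once, and that is the version I would write up.
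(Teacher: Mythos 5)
Your proof is correct, and it takes a genuinely different route from the paper's. The paper argues structurally: it fixes, without loss of generality, the arc from $x$ to $y$, then splits on the orientation of the quasi-twin pair $\{y,z\}$, and in each case exhibits either a pair of opposite arcs between $x$ and $z$ or a vertex (namely $y$) lying in exactly one of $N^-(x)$ and $N^-(z)$ --- either of which is incompatible with $x$ and $z$ being quasi-twins. You instead extract the single numerical consequence that, in a loopless digraph, a quasi-twin pair has in-degrees differing by exactly $1$ (since $N^-(u)=N^-(v)\cup\{v\}$ with $v\notin N^-(v)$), and observe that two such unit shifts compose to a shift lying in $\{-2,0,2\}$, which can never again equal $\pm 1$. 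This packages all the orientation cases at once, and it actually yields something slightly stronger for free: the quasi-twin relation, viewed as a graph on the vertex set, is bipartite with classes given by the parity of the in-degree, so it contains no odd cycle, not merely no triangle. The two points a careful write-up should make explicit are exactly the ones you flagged: looplessness is what guarantees that the union $N^-(v)\cup\{v\}$ adds precisely one new element, and it also disposes of the degenerate coincidences among $x$, $y$, $z$ (no vertex is a quasi-twin of itself).
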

\begin{proof}
  Suppose without loss of generality that we have the arc from $x$ to $y$. There are two cases. If we have the arc from $z$ to $y$, then we must have both arcs from $x$ to $z$ and from $z$ to $x$, so $x$ and $z$ cannot be quasi-twins. If we have the arc from $y$ to $z$, then we also have the arc from $x$ to $z$. But then $y$ is an in-neighbour of $z$ and not of $x$, so $x$ and $z$ cannot be quasi-twins.
\end{proof}

We now present a family of twin-free digraphs $G_k$ of order $n$ that have location-domination number almost $\frac{2n}{3}$. The graph $G_3$ is drawn in Figure \ref{figureLD=2n/3}.

\begin{figure}[htpb!]
\begin{center}
\begin{tikzpicture}
\path (0,-2) node[draw,shape=circle] (s) {$s$};
\path (0,4) node[draw,shape=circle,fill=gray!70] (t) {$t$};

\path (1,1) node[draw,shape=circle,fill=gray!70] (b) {};
\path (-1,1) node[draw,shape=circle] (c) {};
\path (0,2) node[draw,shape=circle,fill=gray!70] (d) {};

\path (2,1) node[draw,shape=circle,fill=gray!70] (e) {};
\path (3,2) node[draw,shape=circle,fill=gray!70] (f) {};
\path (4,1) node[draw,shape=circle] (g) {};

\path (-4,1) node[draw,shape=circle,fill=gray!70] (h) {};
\path (-3,2) node[draw,shape=circle] (i) {};
\path (-2,1) node[draw,shape=circle] (j) {};

\draw[line width=0.4mm,>=latex,->] (s) .. controls +(6,0.5) and +(6,-0.5) .. (t);

\draw[line width=0.4mm,>=latex,->] (c) -- (b);
\draw[line width=0.4mm,>=latex,->] (b) -- (d);
\draw[line width=0.4mm,>=latex,->](d)--(c) ;

\draw[line width=0.4mm,>=latex,->](e) -- (g);
\draw[line width=0.4mm,>=latex,->](g) -- (f);
\draw[line width=0.4mm,>=latex,->](f) -- (e);

\draw[line width=0.4mm,>=latex,->](h) -- (j);
\draw[line width=0.4mm,>=latex,->](j) -- (i);
\draw[line width=0.4mm,>=latex,->](i) -- (h);

\draw[line width=0.4mm,>=latex,->](b) -- (s);
\draw[line width=0.4mm,>=latex,->](c) -- (s);
\draw[line width=0.4mm,>=latex,->](d) -- (s);
\draw[line width=0.4mm,>=latex,->](e) -- (s);
\path[line width=0.4mm,>=latex,->,draw](f) to[out=-110,in=45] (s);
\draw[line width=0.4mm,>=latex,->](g) -- (s);
\draw[line width=0.4mm,>=latex,->](h) -- (s);
\path[line width=0.4mm,>=latex,->,draw](i) to[out=-80,in=135] (s);
\draw[line width=0.4mm,>=latex,->](j) -- (s);

\draw[line width=0.4mm,>=latex,->](t) -- (b) ;
\draw[line width=0.4mm,>=latex,->](t) -- (c) ;
\draw[line width=0.4mm,>=latex,->](t) -- (d);
\draw[line width=0.4mm,>=latex,->](t) -- (e) ;
\draw[line width=0.4mm,>=latex,->](t) -- (f) ;
\path[line width=0.4mm,>=latex,->,draw](t) to[out=-20,in=110] (g) ;
\path[line width=0.4mm,>=latex,->,draw](t) to[out=-160,in=70] (h) ;
\draw[line width=0.4mm,>=latex,->](t) -- (i) ;
\draw[line width=0.4mm,>=latex,->](t) -- (j) ;

\end{tikzpicture}
\end{center}
\caption{A strongly connected twin-free and quasi-twin-free digraph of order $n$ with location-domination number $\frac{2(n-2)}{3}$, with a locating-dominating set in grey.}
\label{figureLD=2n/3}
\end{figure}
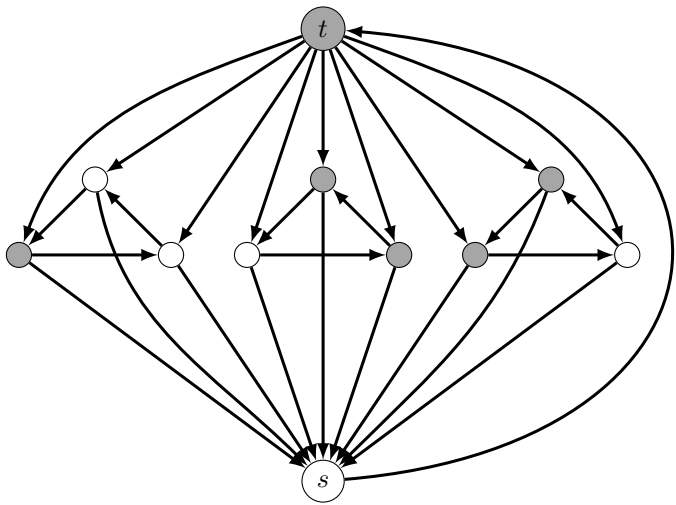

\begin{proposition}\label{prop:exampleLD=2n/3}
  Let $G_k$ be the strongly connected (hence, source-free) twin-free and quasi-twin-free digraph of order $n=3k+2$ obtained from $k$ vertex-disjoint directed triangles by adding a new vertex $s$ that is an out-neighbour of all vertices of each triangle, a vertex $t$ that is an in-neighbour of all vertices of each triangle, and an arc from $s$ to $t$. Then, we have $\LD(G_k)=\frac{2(n-2)}{3}$.
\end{proposition}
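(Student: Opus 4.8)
The plan is to prove the two inequalities $\LD(G_k)\le 2k$ and $\LD(G_k)\ge 2k$ separately, noting that $\frac{2(n-2)}{3}=2k$ since $n=3k+2$. I write the $i$-th triangle as $x_i\to y_i\to z_i\to x_i$ for $1\le i\le k$, so that $N^-(x_i)=\{z_i,t\}$, $N^-(y_i)=\{x_i,t\}$, $N^-(z_i)=\{y_i,t\}$, $N^-(s)=\{x_i,y_i,z_i : 1\le i\le k\}$ and $N^-(t)=\{s\}$. (That $G_k$ is strongly connected, twin-free and quasi-twin-free is a routine verification from these neighbourhoods.) A useful structural remark that I will reuse is that a triangle vertex $v\notin D$ has $N^-(v)=\{\mathrm{pred}(v),t\}$, where $\mathrm{pred}(v)$ denotes the in-neighbour of $v$ inside its own triangle; hence $N^-(v)\cap D$ equals $\{t\}$, $\{t,\mathrm{pred}(v)\}$, $\{\mathrm{pred}(v)\}$ or $\emptyset$ according to whether $t$ and $\mathrm{pred}(v)$ lie in $D$.

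For the upper bound I would exhibit the set $D=\{t\}\cup\{x_i,y_i : 1\le i\le k-1\}\cup\{x_k\}$, of size $1+2(k-1)+1=2k$, and check that it is locating-dominating. Domination is immediate: $t$ dominates every triangle vertex, $x_1$ dominates $s$, and the vertices outside $D$ that are not already handled, namely $z_i$ for $i<k$, $y_k$ and $z_k$, are dominated by $y_i$, $x_k$ and $t$ respectively. For location one computes the traces on $D$ of the vertices outside $D$: $N^-(s)\cap D$ has size $2k-1$; $N^-(z_i)\cap D=\{y_i,t\}$ for $i<k$; $N^-(y_k)\cap D=\{x_k,t\}$; and $N^-(z_k)\cap D=\{t\}$. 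These sets are pairwise distinct (the one of $s$ is either too large or, when $k=1$, avoids $t$; the others are distinguished by the triangle index and by containing $\mathrm{pred}$ or not), so $D$ locates.

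For the lower bound, let $D$ be any locating-dominating set and put $d_i=|D\cap\{x_i,y_i,z_i\}|$; I split on whether $t\in D$. If $t\notin D$, then $t$ must be dominated, so $s\in D$; moreover for each $i$ every vertex of the triangle $T_i$ outside $D$ must have its (unique, triangle-internal) in-neighbour in $D$ in order to be dominated, and in a directed $3$-cycle this forces $d_i\ge 2$. Hence $|D|\ge\sum_i d_i\ge 2k$. If $t\in D$, then for a triangle with $d_i=0$ all three of its vertices have $D$-trace $\{t\}$, and for a triangle with $d_i=1$ exactly one vertex has $D$-trace $\{t\}$ (the one whose predecessor is the other vertex outside $D$). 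Since $D$ locates, there is at most one vertex outside $D$ with trace $\{t\}$, so no triangle has $d_i=0$ and at most one triangle has $d_i=1$. Therefore $\sum_i d_i\ge 2(k-1)+1=2k-1$ and $|D|\ge 1+(2k-1)=2k$. In both cases $|D|\ge 2k$, which together with the construction gives $\LD(G_k)=2k=\frac{2(n-2)}{3}$.

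The only genuinely delicate points are the location check in the upper bound and the ``at most one trace-$\{t\}$ vertex'' step in the lower bound; the subtlety common to both is ruling out a collision between $s$ and a triangle vertex, which is handled by observing that when $t\in D$ the trace of $s$ misses $t$ while every triangle vertex's trace contains $t$, and when $t\notin D$ one has $s\in D$. I would also spell out the small case $k=1$ to confirm the argument degrades gracefully there.
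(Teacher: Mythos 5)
Your proposal is correct and follows essentially the same route as the paper: the same upper-bound construction ($t$, one vertex from one triangle, two from each of the others) and the same lower-bound count (every triangle meets $D$, at most one triangle meets it only once, and in that case $t\in D$), with your split on whether $t\in D$ being just a slightly more explicit organization of the paper's argument. The extra care you take about a possible trace collision with $s$ and about the case $k=1$ is sound but not a different method.
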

\begin{proof}
To see that $\LD(G_k)\leq\frac{2(n-2)}{3}$, consider the following locating-dominating set: take $t$, one vertex of some directed triangle, and two vertices of all the other directed triangles (see Figure~\ref{figureLD=2n/3} for an example).

To see that $\LD(G_k)\geq\frac{2(n-2)}{3}$, consider a locating-dominating set $D$ of $G_k$. First, each of the original directed triangle contains a vertex of $D$, because otherwise the three vertices in this triangle are not located. Second, there is at most one directed triangle that contains only one vertex of $D$, because otherwise in both triangles there is a vertex only dominated by $t$, and they are not located. Finally, if some triangle contains only one vertex of $D$, then $t\in D$ because otherwise some vertex of the triangle is not dominated. So, in total there are two vertices of $D$ for each directed triangle. Since there are $\frac{n-2}{3}$ original directed triangles, the proof is finished.
\end{proof}

\section{Characterizing digraphs with location-domination number $n-1$}\label{sec:charact-n-1}

In this section, we characterize those digraphs with maximum possible location-domination and location numbers.

For every digraph $G$ of order $n$, any set of size $n-1$ is a locating set, thus $\SEP(G)\leq n-1$. This is not true for locating-dominating sets: consider a digraph with no arcs. However, this is the only such example: if $G$ has some arc, then it has a locating-dominating set of size at most $n-1$ (consider the set obtained from $V(G)$ by removing the endpoint of one arbitrary arc).

We say that a vertex is \emph{universal} if it is an in-neighbour of all other vertices. We first characterize those digraphs of order $n$ with $\SEP(G)=n-1$ (of course, unless it has no arc, such a digraph also satisfies $\LD(G)=n-1$).

\begin{proposition}\label{prop:sep=n-1}
  Let $G$ be a digraph of order $n$. We have $\SEP(G)=n-1$ if and only if every vertex is either universal, or a sink.
\end{proposition}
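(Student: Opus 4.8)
The plan is to reduce the statement to a purely local condition on pairs of vertices. Since every $(n-1)$-subset of $V(G)$ is a locating set, we have $\SEP(G)=n-1$ if and only if no set of size $n-2$ is locating. A set of size $n-2$ has the form $V(G)\setminus\{x,y\}$, and such a set is a locating set exactly when its only outside pair $\{x,y\}$ is located by it, i.e.\ when some vertex $z\notin\{x,y\}$ is an in-neighbour of exactly one of $x$ and $y$. So the first step is to record the equivalence: $\SEP(G)=n-1$ if and only if for all distinct $x,y$ and all $z\notin\{x,y\}$ one has $z\in N^-(x)\iff z\in N^-(y)$.

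For the ``only if'' direction I would argue contrapositively. Suppose some vertex $v$ is neither universal nor a sink. Since $v$ is not a sink it has an out-neighbour $u$, and since $v$ is not universal there is a vertex $w$ with no arc from $v$ to $w$; then $u\neq w$ and $v\notin\{u,w\}$, while $v\in N^-(u)$ and $v\notin N^-(w)$. Hence $\{u,w\}$ is located by $V(G)\setminus\{u,w\}$, which is therefore a locating set of size $n-2$, so $\SEP(G)\leq n-2$.

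For the converse, assume every vertex is universal or a sink and verify the local condition directly. Fix distinct $x,y$ and $z\notin\{x,y\}$: if $z$ is a sink then $z$ lies in neither $N^-(x)$ nor $N^-(y)$, and if $z$ is universal then (as $z\neq x$ and $z\neq y$) it lies in both. In either case $z\in N^-(x)\iff z\in N^-(y)$, so no $(n-2)$-set is locating and $\SEP(G)=n-1$.

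The argument is short and I do not anticipate a genuine obstacle; the only points requiring care are the degenerate small cases (for $n\leq 2$ the notions of universal vertex and sink become trivial, but the statement still holds) and the observation that a vertex is never its own in-neighbour, which is exactly what makes the reduction to pairs clean.
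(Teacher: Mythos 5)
Your proof is correct and follows essentially the same route as the paper: the forward direction is the identical witness argument (a vertex that is neither universal nor a sink locates one of its out-neighbours from a non-out-neighbour), and your converse, phrased as checking that every candidate witness $z$ is either a sink (in-neighbour of neither) or universal (in-neighbour of both), is just a pointwise restatement of the paper's observation that every vertex $v$ satisfies $N^-[v]=U\cup\{v\}$. Nothing further is needed.
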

\begin{proof}
  Recall that $\SEP(G)\leq n-1$ always holds. If some vertex $x$ has an out-neighbour $y$ and there exists another vertex $z$ that is not an out-neighbour of $x$, then $V(G)\setminus\{y,z\}$ is a locating set of $G$ of size $n-2$. Thus, if $\SEP(G)=n-1$, then every vertex in $G$ is either universal or a sink.

  On the other hand, suppose that every vertex of $G$ is either universal or is a sink. Let $U$ be the set of universal vertices, and $S$, the set of sinks. Every vertex $v$ is dominated by the set $U\cup\{v\}$. Thus, if two distinct vertices are not in a locating set $L$, they are both dominated exactly by the vertices in $L\cap U$, a contradiction. Thus we have $|L|\geq n-1$.
\end{proof}

We call a digraph $G$ a \emph{directed star} if it has a special vertex that belongs to all the arcs, and there are no isolated vertices (see Figure~\ref{fig:Dstar}). In other words, the underlying undirected graph of $G$ is a star.

\begin{figure}[!htpb]
  \centering
\begin{tikzpicture}
\path (0,0) node[draw,shape=circle] (a) {$x$};
\path (2,0.5) node[draw,shape=circle,fill=gray] (b) {};
\path (0,2) node[draw,shape=circle,fill=gray] (c) {};
\path (-2,0.5) node[draw,shape=circle,fill=gray] (d) {};
\path (-1.25,-1.75) node[draw,shape=circle,fill=gray] (e) {};
\path (1.25,-1.75) node[draw,shape=circle,fill=gray] (f) {};

\draw[->,>=latex,line width=0.4mm] (b) -- (a);
\draw[->,>=latex,line width=0.4mm] (a) -- (c);
\draw[->,>=latex,line width=0.4mm] (a) -- (d);
\draw[->,>=latex,line width=0.4mm] (e) to[bend right=20] (a);
\draw[->,>=latex,line width=0.4mm] (a) to[bend right=20] (e);
\draw[->,>=latex,line width=0.4mm] (f) to[bend right=20] (a);
\draw[->,>=latex,line width=0.4mm] (a) to[bend right=20] (f);
\end{tikzpicture}
\caption{A directed star, with a minimum locating-dominating set in grey.}\label{fig:Dstar}
\end{figure}
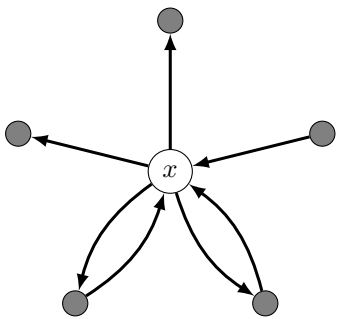

Directed stars form another family of connected digraphs with large location-domination number.

\begin{proposition}\label{prop:stars}
For any directed star $G$ of order $n\geq 2$, we have $\LD(G)=n-1$.
\end{proposition}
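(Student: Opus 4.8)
The plan is to exhibit a locating-dominating set of size $n-1$ and then argue no smaller one exists. Let $x$ be the centre of the directed star $G$, and let the leaves be $v_1,\dots,v_{n-1}$; every arc of $G$ has $x$ as one endpoint, so each $v_i$ is joined to $x$ by one of three configurations: an arc $v_i\to x$ only, an arc $x\to v_i$ only, or both arcs. For the upper bound, I would simply take $D=\{v_1,\dots,v_{n-1}\}$, i.e.\ all leaves. Then $x$ is dominated (since $G$ has an arc and at least one leaf points to $x$, because otherwise $x$ would be a source with no in-neighbour — actually we need to check $x$ has an in-neighbour; if every leaf only receives from $x$, then $x$ is not dominated, so instead pick $D=V(G)\setminus\{v_1\}$ for a leaf $v_1$ with $x\to v_1$ an arc). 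In either formulation $|D|=n-1$, every vertex outside $D$ is dominated, and since there is only one vertex outside $D$ there is nothing to locate; hence $D$ is locating-dominating and $\LD(G)\le n-1$.

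For the lower bound I would invoke Proposition~\ref{ldtwin-digraphs}: it suffices to find a large set of pairwise twins or quasi-twins among the leaves. The leaves fall into (at most) three classes according to their adjacency type with $x$: call them $L_{\text{in}}$ (only $v\to x$), $L_{\text{out}}$ (only $x\to v$), and $L_{\text{both}}$ (both arcs). Within $L_{\text{in}}$ every vertex has empty open in-neighbourhood, so these are pairwise open twins. Within $L_{\text{out}}$ every vertex has open in-neighbourhood $\{x\}$, so these are pairwise open twins. Within $L_{\text{both}}$ every vertex has open in-neighbourhood $\{x\}$, so these are also pairwise open twins — and moreover each vertex of $L_{\text{out}}\cup L_{\text{both}}$ has the same open in-neighbourhood $\{x\}$, so in fact $L_{\text{out}}\cup L_{\text{both}}$ is a single class of pairwise open twins. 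Thus the $n-1$ leaves partition into two twin classes, of sizes $|L_{\text{in}}|$ and $|L_{\text{out}}\cup L_{\text{both}}|$, whose sizes sum to $n-1$.

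The remaining work is a short case analysis showing that combining the contributions from these two classes forces $\LD(G)\ge n-1$ rather than merely $n-2$. By Proposition~\ref{ldtwin-digraphs}, any locating-dominating set misses at most one vertex of $L_{\text{in}}$ and at most one vertex of $L_{\text{out}}\cup L_{\text{both}}$, so it omits at most two leaves in total, giving $\LD(G)\ge n-2$ immediately; the subtlety is ruling out omitting one from each class. Here I would argue directly: if a locating-dominating set $D$ omits a leaf $a\in L_{\text{in}}$ and a leaf $b\in L_{\text{out}}\cup L_{\text{both}}$, then $a\notin D$ has no in-neighbour at all (its only possible in-neighbour is $x$, and $x\notin N^-(a)$ since $a\in L_{\text{in}}$), so $a$ is not dominated — contradiction, unless $L_{\text{in}}=\emptyset$. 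If instead $L_{\text{in}}=\emptyset$, then all $n-1$ leaves lie in one twin class and Proposition~\ref{ldtwin-digraphs} directly gives $\LD(G)\ge n-2$; to push to $n-1$, note that omitting two leaves $b_1,b_2$ from this class leaves them both with in-neighbourhood exactly $D\cap\{x\}$ inside $D$, so they are not located — contradiction. Hence in all cases $\LD(G)\ge n-1$, and with the upper bound $\LD(G)=n-1$. The main obstacle is this last step: making sure the twin bound is sharpened to $n-1$ by carefully treating the degenerate case where one of the two classes is empty (equivalently, the case where $x$ is a source, which cannot happen if $G$ is a genuine directed star with $x$ incident to all arcs and no isolated vertices — so in fact $L_{\text{in}}\cup L_{\text{both}}\ne\emptyset$ always, which I would use to streamline the argument).
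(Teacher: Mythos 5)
Your class decomposition of the leaves and the appeal to Proposition~\ref{ldtwin-digraphs} are sound, and the upper bound is fine, but the lower bound as written has a gap: you never exclude the configuration in which $D$ omits the centre $x$ \emph{together with} one leaf $b\in L_{\text{out}}\cup L_{\text{both}}$. Such a set has size $n-2$, contains every source, and omits only one vertex from the twin class $L_{\text{out}}\cup L_{\text{both}}$, so it passes every test you impose; it fails only because $N^-(b)=\{x\}$ and $x\notin D$, so $b$ is undominated. This one-line observation (if a leaf whose in-neighbourhood is $\{x\}$ is omitted, then $x$ must belong to $D$ to dominate it) is precisely the closing step of the paper's proof and is what lifts the bound from $n-2$ to $n-1$. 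Your intermediate claim that ``omits at most two leaves'' already gives $\LD(G)\geq n-2$ silently assumes $x\in D$ (otherwise it only yields $n-3$), which is the same oversight.

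A secondary point: your parenthetical assertion that $L_{\text{in}}\cup L_{\text{both}}\neq\emptyset$ always holds is false. A directed star may have all arcs oriented away from the centre (e.g.\ $x\to v_1$ and $x\to v_2$), in which case $x$ is a source; the ``no isolated vertices'' condition only forbids vertices lying on no arc. You should not streamline the argument using that claim. Fortunately your explicit case analysis does not actually depend on it, so once the missing domination step above is inserted, your proof coincides in substance with the paper's, which argues exactly along these lines: all but one out-neighbour of $x$ lies in $D$ by the twin/location argument, every source lies in $D$, and the single omitted neighbour forces $x\in D$.
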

\begin{proof}
Let $x$ be the centre of $G$ (the vertex belonging to all arcs). Since $n\geq 2$, there is at least one arc in $G$, and so $\LD(G)\leq n-1$. Let $D$ be a locating-dominating set of $G$. Clearly, every source of $G$ belongs to $D$. If two neighbours of $x$ do not belong to $D$, then they are not located. Thus, at most one neighbour of $x$ is missing from $D$. In the case where exactly one such vertex is not in $D$, in order to have this vertex dominated, $x$ must belong to $D$. This shows that $\LD(G)\geq n-1$.
\end{proof}

 The only connected undirected graphs of order $n$ with location-domination number $n-1$ are stars and complete graphs~\cite{slater} (seen as digraphs, they correspond to bidirected stars and bidirected complete graphs). As we already mentioned, there are more digraph examples. We now characterize all of them.

\begin{theorem}\label{thm:LD=n-1}
  For a connected digraph $G$ of order $n\geq 2$, we have $\LD(G)=n-1$ if and only if at least one of the following conditions holds:
  \begin{itemize}
  \item[(a)] $n=3$;
  \item[(b)] $G$ is a directed star;
  \item[(c)] $V(G)$ can be partitoned into three (possibly empty) sets $S_1$, $C$ and $S_2$, such that $S_1$ and $S_2$ are independent sets, $C$ is a bidirected clique, and the remaining arcs in $G$ are all the possible arcs from $S_1$ to $C\cup S_2$ and those from $C$ to $S_2$.
  \end{itemize}
\end{theorem}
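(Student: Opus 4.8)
The plan is to prove both directions of the characterization. The ``if'' direction is the easy one: for each of the three families (a), (b), (c) I would exhibit directly why no locating-dominating set of size $n-2$ can exist. For (b), this is already Proposition~\ref{prop:stars}. For (a), I would do a quick finite check of all connected digraphs on $3$ vertices (or observe that on $3$ vertices any set of size $1$ fails to locate the two remaining vertices unless they have distinct in-neighbourhoods within that singleton, and rule out the few configurations where it could work — noting that such configurations are not connected or fall into case (b)/(c) anyway). For (c), the key observation is that the vertices of $S_1$ are all sources, hence in every locating-dominating set $D$; the vertices of $S_2$ all have in-neighbourhood containing $S_1\cup C$ plus possibly some of $S_2$, and crucially any two vertices of $S_2$ have the \emph{same} in-neighbours outside $S_2$, so at most one vertex of $S_2$ can be outside $D$, and similarly $C$ is a bidirected clique, so by Proposition~\ref{ldtwin-digraphs}-style reasoning at most one vertex of $C\cup S_2$ can be missing from $D$ — I would need to check the interaction carefully, but the upshot is $|D|\ge n-1$. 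Conversely $\LD(G)\le n-1$ always holds when $G$ has an arc, which connected digraphs of order $n\ge 2$ do.

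For the ``only if'' direction, I would assume $G$ is connected of order $n\ge 2$ with $\LD(G)=n-1$, and also assume $n\ge 4$ and that $G$ is not a directed star, aiming to deduce structure (c). The main tool is the contrapositive: if for \emph{some} pair $\{x,y\}$ the set $V(G)\setminus\{x,y\}$ is locating \emph{and} dominating, then $\LD(G)\le n-2$. The set $V(G)\setminus\{x,y\}$ fails to be locating-dominating only if either (i) $x$ and $y$ have the same in-neighbours in $V(G)\setminus\{x,y\}$, or (ii) one of $x,y$ has no in-neighbour in $V(G)\setminus\{x,y\}$. So for every pair $\{x,y\}$ of vertices, one of (i) or (ii) must hold. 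I would then analyze what this strong condition forces. First, the set of sources $S_1$: any source clearly must be in every locating-dominating set; I'd show that every vertex not forced in is constrained. Then I would partition $V(G)$ by in-degree/role and push the pair-condition: for two vertices $x,y$ that both have in-neighbours outside $\{x,y\}$, condition (i) says $N^-(x)\setminus\{y\} = N^-(y)\setminus\{x\}$, i.e. $x,y$ are twins or quasi-twins. So outside a small ``exceptional'' set, all vertices are pairwise twins/quasi-twins; but Proposition~\ref{quasitwins} says you can't have three pairwise quasi-twins, so large blocks must be genuine twins — open twins (an independent-set-like block) or closed twins (a bidirected-clique block). Sorting out how the source-block, the closed-twin block, and the sink-block fit together, using connectivity and the requirement that \emph{every} pair satisfies (i) or (ii), should yield exactly the partition $S_1,C,S_2$ with the prescribed arc set.

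The main obstacle I anticipate is the bookkeeping in the ``only if'' direction: the pair-condition ``(i) or (ii) for every pair'' is a disjunction, so one cannot simply conclude that all vertices are mutual twins — some pairs are excused because one endpoint is (or becomes) a source relative to the pair. I expect to need a careful case analysis on the vertices that are \emph{not} sources of $G$ but become undominated after deleting a suitable partner, and to handle the small cases ($n=3$, and near-degenerate configurations like two vertices) separately, which is presumably why clause (a) appears as an exception rather than as a consequence of (c). A secondary subtlety is making sure the constructed partition's arc set is \emph{exactly} as stated (no missing or extra arcs): each potential arc, e.g. from $C$ to $S_1$ or within $S_2$, must be shown to be forbidden, again by exhibiting a size-$(n-2)$ locating-dominating set if such an arc were present. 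I would organize this as a sequence of small claims (``$G$ has at most one sink of a certain type'', ``the non-sinks form a transitive-like structure'', etc.), each proved by the delete-a-pair argument, and then assemble them.
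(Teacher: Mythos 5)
Your plan is sound and rests on the same engine as the paper's proof --- ``$\LD(G)=n-1$ iff for every pair $\{x,y\}$ the set $V(G)\setminus\{x,y\}$ fails to locate or to dominate'' --- but you organize the converse differently. The paper splits on whether $G$ has sources: in the source case it shows all sources share the same out-neighbourhood $N^+(S)$ with all arcs present, deduces that $G[N^+(S)]$ must have location number one less than its order, and then invokes Proposition~\ref{prop:sep=n-1} to get the universal/sink (i.e.\ $C$/$S_2$) structure directly, finishing with a short analysis of the leftover vertices; in the source-free case it pins down a single triple $x,y,z$ (with $y$ the unique in-neighbour of $z$) and kills everything else by exhibiting explicit size-$(n-2)$ locating-dominating sets. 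Your route instead takes the universal pair condition globally, reads off that every non-excused pair is a pair of twins or quasi-twins, and uses Proposition~\ref{quasitwins} to force large blocks into open-twin or closed-twin classes. The two viewpoints coincide (in a digraph where every vertex is universal or a sink, universal--universal pairs are closed twins, sink--sink pairs are open twins, and universal--sink pairs are quasi-twins), but the paper's use of Proposition~\ref{prop:sep=n-1} packages the twin-class bookkeeping into one reusable statement --- it also cleans up your ``if'' direction for (c), where after removing the forced sources $S_1$ one needs exactly a locating set of $G[C\cup S_2]$ of size $|C|+|S_2|-1$ (note, by the way, that in (c) the in-neighbourhood of a vertex of $S_2$ is exactly $S_1\cup C$, since $S_2$ is independent, so two such vertices are genuine open twins).

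The one place where your sketch leaves real work is precisely the one you flag: the pairs excused by the domination clause where neither endpoint is a source of $G$, i.e.\ a vertex $z$ whose unique in-neighbour is $y$. Your twin-class decomposition says nothing about such $z$, and this is where the paper's Case~2 spends most of its effort (showing $R=V(G)\setminus\{x,y,z\}$ is independent and isolated from $x$, then deriving a contradiction or a directed star). So the plan is workable, but be aware that this residual case analysis is the bulk of the proof, not a footnote to it.
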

\begin{proof}
 Assume first that $n=2$ (then $G$ is a directed star) or $n=3$. If two vertices of $G$ are not in some locating-dominating set of $G$, then either these two are not located, or one of them is not dominated: a contradiction. Thus, necessarily $\LD(G)=n-1$ and we may assume in the remainder that $n\geq 4$.

By Proposition~\ref{prop:stars}, (b) implies that $\LD(G)=n-1$. Next, assume that (c) holds. Then, $G$ is obtained by adding the set $S_1$ to a digraph where all vertices are either universal (those in $C$) or sinks (those in $S_2$). Since all vertices in $S_1$ are sources, they all belong to any locating-dominating set of $G$. Moreover, every vertex in $S_2\cup C$ is dominated by all vertices in $S_1$, that is, we need a locating set of $G[C\cup S_2]$ in every locating-dominating set of $G$. By Proposition~\ref{prop:sep=n-1}, such a set has size exactly $|C|+|S_2|-1$, thus $\LD(G)=n-1$.

We must now prove the converse: assume that $\LD(G)=n-1$ (and $n\geq 4$). We must prove that (b) or (c) holds.

\textit{Case 1.} Suppose first that $G$ contains some sources, and let $S$ be the set of these sources. If there is a vertex $s$ in $S$ and two vertices $x$ and $y$ of $N^+(S)$ (where $N^+(S)$ denotes the union of the out-neighbourhoods of all vertices in $S$) that are located by $s$, then $V(G)\setminus \{x,y\}$ is a locating-dominating set of $G$ of size $n-2$, a contradiction. Thus all the vertices of $S$ have the same neighbourhood and $G$ contains all the arcs between $S$ and $N^+(S)$.

Consider now the subdigraph $G'$ of $G$ induced by $N^+(S)$. If it has a locating set $L'$ of size $|V(G')|-2$ (assume that the two vertices $x$ and $y$ are those not in $L'$), then the set $V(G)\setminus\{x,y\}$ would be a locating-dominating set of $G$, a contradiction. Thus, we have $\SEP(G')=|V(G')|-1$, and by Proposition~\ref{prop:sep=n-1} the vertices of $G'$ can be partitioned into sinks of $G'$ (set $S'$) and universal vertices of $G'$ (set $U'$). We let $R=V(G)\setminus (S\cup S'\cup U')$. If $R$ is empty, we are done, because then $G$ satisfies the condition (c), with $S_1=S$, $C=U'$ and $S_2=S'$. Thus, we assume that $R$ is nonempty. If $V(G)\setminus S$ contains an arc from vertex $a$ to vertex $b\in R$ such that $(S'\cup U')\setminus\{a\}$ is nonempty, then we could construct a locating-dominating set of $G$ of size $n-2$ from $V(G)$ by removing $b$ and any vertex of $(U'\cup S')\setminus\{a\}$. Thus, $R$ must be an independent set, and if there is an arc from $S'\cup U'$ to $R$, then $|S'\cup U'|=1$. But since $R$ contains no sources, there is necessarily an arc from $S'\cup U'$ to $R$, and so $|S'\cup U'|=1$. But then, $G$ is a directed star and (b) holds, so we are done.

\textit{Case 2.} Now, we assume that $G$ has no sources. If every vertex is either universal or a sink, $G$ satisfies (c) (with $S_1=\emptyset$) and we are done. Thus, we may assume that there exists a vertex $x$ with an out-neighbour $y$, and a third vertex $z$ that is not an out-neighbour of $x$. By assumption, $V(G)\setminus\{y,z\}$ is not a locating-dominating set; but since $x$ locates $y$ and $z$, it is a locating set, and it certainly dominates $y$. Thus, $V(G)\setminus\{y,z\}$ does not dominate $z$, that is, $y$ is the only in-neighbour of $z$ (recall that $z$ has an in-neighbour since $G$ has no sources). Let $R=V(G)\setminus\{x,y,z\}$. If $x$ had an out-neighbour $t$ in $R$, then $V(G)\setminus\{t,z\}$ would be locating-dominating, a contradiction. Similarly, if $x$ had an in-neighbour in $R$, then $V(G)\setminus\{x,z\}$ would be locating-dominating, a contradiction. If there is an arc from a vertex $u$ to a vertex $v$ inside $R$, then $V(G)\setminus\{v,z\}$ is a locating-dominating set of $G$, a contradiction. Thus $R$ is an independent set. Now, if $z$ has no neighbour in $R$, $G$ is a directed star with centre $y$, and we are done. Thus, $z$ must have an out-neighbour $p$ in $R$. But now, $V(G)\setminus\{p,y\}$ is locating-dominating, a contradiction. This completes the proof.
\end{proof}

\section{A general method to obtain locating-dominating sets of twin-free digraphs}\label{sec:general-method}

Note that all graphs described in Section~\ref{sec:charact-n-1} of order $n\geq 4$ have twins. What happens for twin-free digraphs?

In this section, we propose a general method to obtain locating-dominating sets of twin-free digraphs, based on special dominating sets. This method was used in~\cite{heia} for the case of undirected graphs (a similar argument was also used in~\cite{Garijo}). It was adapted to digraphs in~\cite{Shahrzad} for quasi-twin-free digraphs, and here we extend it to all twin-free digraphs.

First we start with some definitions.
\begin{definition}
  Let $S$ be a set of vertices of a digraph $G$. The \emph{$S$-partition} $\mathcal{P}_S$ of $V(G)\setminus S$ is the partition of $V(G)\setminus S$ where two vertices are in the same part if and only if they have the same set of in-neighbours in $S$.

Given a vertex $v\in S$, an \emph{$S$-external private neighbour} of $v$ is a vertex outside $S$ that is an out-neighbour of $v$ but of no other vertex of $S$ in $G$.
\end{definition}

\begin{theorem}\label{thm:extended-general-method}
Suppose that $G$ is a twin-free digraph of order $n$. Let $S$ be a dominating set of $G$ such that the $S$-partition of $V(G)\setminus S$ contains at least $x\cdot|S|$ parts (with $0< x\leq 1$). Then, $\LD(G) \leq\frac{2x+1}{3x+1}n$. Moreover, if $G$ is also quasi-twin-free, then $\LD(G) \leq\frac{x+1}{2x+1}n$.
\end{theorem}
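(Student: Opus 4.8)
The plan is to start from the dominating set $S$ and greedily build a locating-dominating set $D$ by adding, for each non-singleton part of the $S$-partition $\mathcal{P}_S$, all but one of its vertices, and then deal with the handful of residual pairs that are still unlocated. So I would first set $D_0 = S \cup \bigcup_{P \in \mathcal{P}_S, |P|\geq 2} (P \setminus \{v_P\})$, where $v_P$ is one chosen vertex of each big part. After this step, every pair of vertices outside $D_0$ lies in distinct parts of $\mathcal{P}_S$, hence is located by $S$; so $D_0$ is already locating-dominating, and $|D_0| = n - (\text{number of parts})  \le n - x|S|$. This alone is not good enough, so the real work is to show $S$ itself can be taken small, or rather to trade off between the size of $S$ and the number of parts.

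The key idea (following \cite{heia,Shahrzad}) is that we do not take all of $S$: instead we build $S$ as a union of a dominating "core" together with private neighbours, and we only keep in $D$ those vertices of $S$ that are genuinely needed. Concretely, I would partition $S$ according to how many $S$-external private neighbours each vertex has. A vertex $v\in S$ with at least two $S$-external private neighbours $a,b$: then $a,b$ are a part of $\mathcal{P}_S$ of size $\geq 2$ located only through $v$, and this is the expensive situation. A vertex with exactly one $S$-external private neighbour $a$: $v$ and $a$ may form a quasi-twin-like configuration, and here the quasi-twin-free hypothesis lets us save. A vertex with no $S$-external private neighbour can potentially be removed from $D$. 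The accounting assigns to each vertex of $S$ a "block" of vertices of $G$ (itself plus its private neighbours plus the extra vertices of the parts it is responsible for), shows these blocks are essentially disjoint and cover a constant fraction of $V(G)$, and in each block we keep only a $\tfrac{2}{3}$-fraction (resp. $\tfrac12$-fraction in the quasi-twin-free case) in $D$. Running the arithmetic: a block "charged" to a vertex with $\geq 2$ private neighbours has size roughly $3$ and contributes $2$ to $D$ (giving the $\tfrac{2x+1}{3x+1}$ ratio), while quasi-twin-freeness forbids the worst blocks and pushes each block down to size $2$ contributing $1$, giving $\tfrac{x+1}{2x+1}$. The twin-free hypothesis is used throughout to guarantee that singleton parts of $\mathcal{P}_S$ and vertices of $S$ with no private neighbour cannot "collide" — if two such vertices had the same in-neighbourhood in $D$ we could locate them by a cheap local swap, using twin-freeness to find a distinguishing vertex.

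I would then verify the bound by a clean weighting/discharging computation: assign weight $1$ to every vertex of $G$, and redistribute so that every vertex that ends up in $D$ carries weight at most $\tfrac{2x+1}{3x+1}$ (resp. $\tfrac{x+1}{2x+1}$) times the total weight of its block; summing over blocks and over the (few) leftover vertices not in any block gives $|D| \le \tfrac{2x+1}{3x+1} n$. The place where $x$ enters is exactly the lower bound "$\ge x|S|$ parts": more parts means $S$ has many vertices with private neighbours or many big parts, both of which improve the per-block ratio; I would phrase this as a convexity/averaging argument over the blocks rather than treating cases of $x$ separately.

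The main obstacle I expect is the bookkeeping around vertices of $S$ with exactly one or zero $S$-external private neighbours: these are precisely where quasi-twins can appear, and showing that (in the quasi-twin-free case) each such vertex can be removed from $D$ — or paired down to a block of size $2$ — without destroying the locating property requires care. One must check that after removing $v$ from $D$, every pair that $v$ was responsible for locating is still separated, which typically forces one to either keep $v$'s private neighbour in $D$ instead, or to invoke Proposition~\ref{quasitwins} to rule out chains of three mutually quasi-twin vertices so that the "repair" is purely local and the blocks stay disjoint. Getting the disjointness of blocks exactly right (so that no vertex is double-counted and the fractions multiply correctly) is the technical heart of the argument.
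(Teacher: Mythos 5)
Your first step is sound and matches one half of the paper's argument: the set $D_0=S\cup\bigcup_{|P|\ge 2}(P\setminus\{v_P\})$ is indeed locating-dominating of size $n-(n_1+n_2)$, where $n_1+n_2$ is the number of parts of $\mathcal{P}_S$. But from there the proposal goes in a direction that cannot deliver the stated bounds. The paper's proof works by producing a \emph{second} candidate set, $D_1=S\cup\{\text{vertices in singleton parts}\}$ (augmented by one vertex per quasi-twin pair in the general case), proving it is locating-dominating, and then taking the smaller of $|D_1|$ and $|D_0|$: if $D_0$ is large then $n_1+n_2$ is small, hence $|S|\le(n_1+n_2)/x$ is small, hence $D_1$ is small. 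The constants $\frac{2x+1}{3x+1}$ and $\frac{x+1}{2x+1}$ arise precisely from balancing these two expressions. Your per-block ratios of $\tfrac23$ and $\tfrac12$ do not equal $\frac{2x+1}{3x+1}$ or $\frac{x+1}{2x+1}$ for \emph{any} $x\in(0,1]$ (e.g.\ at $x=1$ the bounds are $\tfrac34 n$ and $\tfrac23 n$), and the proposal gives no concrete mechanism by which the hypothesis ``at least $x|S|$ parts'' enters the accounting; ``a convexity/averaging argument over the blocks'' is not a proof. The private-neighbour/block analysis you describe is in fact the engine of Proposition~\ref{prop:DS-S/2-parts} (producing a dominating set with many parts), not of this theorem.

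Two further gaps. First, the set $S\cup\{\text{singletons}\}$ is \emph{not} locating in general: two vertices in the same large part of $\mathcal{P}_S$ have identical in-neighbourhoods in $S$. The paper fixes this by choosing $S$ \emph{maximal} subject to having at least $x|S|$ parts; then, by twin-freeness, any unlocated pair inside a part that is not a quasi-twin pair would yield a vertex $q_3$ whose addition to $S$ strictly increases the number of parts while preserving the ratio (here is where $x\le 1$ is used), contradicting maximality. Your proposal never modifies or maximizes $S$, so you have no way to certify that anything smaller than $D_0$ is locating. Second, your suggestion that a vertex of $S$ with no $S$-external private neighbour ``can potentially be removed from $D$'' is unsafe: $S$ is the only set guaranteed to dominate $V(G)\setminus S$ and to separate the parts of $\mathcal{P}_S$, and removing a vertex of $S$ can destroy both domination (of that vertex itself and of the part it privately dominates within $\mathcal{P}_S$) and location (by merging parts). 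You do correctly anticipate the role of Proposition~\ref{quasitwins} in keeping the quasi-twin repairs disjoint, which mirrors the paper's Claim~\ref{claim:quasi-twins}, but without the maximal choice of $S$ and the two-set trade-off the argument does not close.
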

\begin{proof} 
Let $\mathcal P_S=P_1 \cup\ldots\cup P_{n_1}\cup Q_1\cup\ldots \cup Q_{n_2}$ be the $S$-partition of $V(G)\setminus S$, where $P_1,\ldots, P_{n_1}$ are the parts of size~$1$ and $Q_1,\ldots, Q_{n_2}$ are the parts of size at least~$2$.

We assume that $S$ is maximal with the property that $\mathcal P_S$ has at least $x\cdot |S|$ parts (this is ensured by adding vertices to $S$ while this property holds).

Now, we let $D_1 = S\cup \bigcup_i P_i$. We have the following property of $D_1$.

\begin{claim}\label{claim:D1}
Two vertices in $V(G)\setminus D_1$ are located by $D_1$, unless they form a pair of quasi-twins.
\end{claim}
\claimproof Clearly, if two vertices are in different parts of $\mathcal P_S$, they are located by some vertices in $S$. Thus, by contradiction, let $q_1$ and $q_2$ be two vertices of $V(G) \setminus D_1$ belonging to some part $Q_{i_0}$ of $\mathcal P_S$ that are not quasi-twins but are not located by $D_1$. Since $G$ is twin-free, there is a vertex $q_3$ in $V(G)\setminus S$ that can locate $q_1$ and $q_2$: without loss of generality $q_3$ is an in-neighbour of $q_1$ but not $q_2$. By our assumption $q_3 \notin D_1$. Now, consider $S'= S\cup \lbrace q_3\rbrace$ and the corresponding $S'$-partition $\mathcal P_{S'}$ of $V(G)\setminus S'$ (with $n'_1$ and $n'_2$ defined like before). Since $q_3\in\bigcup_i Q_i$, we have $n'_1+n'_2 \geq n_1+n_2+1$ (because in $\mathcal{P}_{S'}$, $Q_{i_0}$ has been split into two parts). So $n'_1 + n'_2 \geq x |S|+1\geq x(|S|+1)=x|S'|$ (because $x\leq 1$). This contradicts the choice of $S$, which we assumed to be maximal with this property.~\smallqed

\medskip

Note that $|D_1|=|S|+n_1$. Since $D_1$ is a dominating set, Claim~\ref{claim:D1} shows that in the absence of quasi-twins, $D_1$ is locating-dominating. Next, we address the case of quasi-twins. We first prove the following fact.

\begin{claim}\label{claim:quasi-twins}
Any two pairs of quasi-twins in $V(G)\setminus D_1$ are disjoint.
\end{claim}
\claimproof Note that two quasi-twins $x$ and $y$ in $V(G)\setminus S$ must belong to the same part of $\mathcal P_S$, since they have the same in-neighbours in $S$. Let $q_1$, $q_2$, $q_3$, $q_4$ be four vertices in $V(G)\setminus D_1$ such that $\{q_1,q_2\}$ and $\{q_3,q_4\}$ are two distinct pairs of quasi-twins (with $q_1$ and $q_3$ being in-neighbours of $q_2$ and $q_4$, respectively). Assume by contradiction that the two pairs are not disjoint. Then, all the vertices in the two pairs belong to the same part of $\mathcal P_S$. If $q_1=q_3$, then $q_2$ and $q_4$ must be twins (because they cannot be quasi-twins by Proposition~\ref{quasitwins}), a contradiction. Similarly, if $q_2=q_4$, then, because $q_1$ and $q_4$, and $q_3$ and $q_4$ are quasi-twins, there must be an arc from $q_1$ to $q_3$ and an arc from $q_3$ to $q_1$. Thus, $q_1$ and $q_3$ are twins. Hence, we may assume without loss of generality that $q_2=q_3$. Then, we proceed similarly as in Claim~\ref{claim:D1}: the set $S'=S\cup\{q_2\}$ still satisfies the property that $n'_1 + n'_2\geq x|S'|$, contradicting the maximality of $S$. This proves the claim.~\smallqed

\medskip

Now, for each pair of quasi-twins in $V(G)\setminus D_1$, we add one of them to $D_1$. By Claims~\ref{claim:D1} and~\ref{claim:quasi-twins}, the resulting set $D_1'$ is locating-dominating and has size at most $|S|+n_1+(n-|S|-n_1)/2=(n+|S|+n_1)/2$.

 Consider now the set $D_2$, of size $n-n_1-n_2$, consisting of $V(G)$ without one (arbitrary) vertex from each part of $\mathcal P_S$. It is clear that $D_2$ is locating-dominating: all vertices of $V(G)\setminus D_2$ are located and dominated by $S$.

We now assume that $G$ has no quasi-twins: then $D_1$ and $D_2$ are two locating-dominating sets of $G$. If $|D_2|\leq\frac{x+1}{2x+1}n$, we are done. So, assume that $|D_2|> \frac{x+1}{2x+1}n$. We have $|D_2|=n-n_1-n_2$, so $n_1 + n_2 <(1-\frac{x+1}{2x+1})n=\frac{x}{2x+1}n$. Recall that $|S| \leq \frac{n_1+n_2}{x}$. Therefore,
\begin{align*}
|D_1| &=|S|+n_1 \\
&\leq |S|+n_1 +n_2\\
&\leq \frac{n_1+n_2}{x} + (n_1+ n_2)\\
&= \left(\frac{1}{x} +1\right)\left(n_1+n_2\right) \\
&<\left(\frac{1}{x} +1\right)\left(\frac{x}{2x+1}n\right)\\
&=\frac{x+1}{2x+1}n\;,
\end{align*}
as desired.

If $G$ has some quasi-twins, we use the locating-dominating sets $D_1'$ and $D_2$. Again, if $|D_2|\leq\frac{2x+1}{3x+1}n$, we are done. So, assume that $|D_2|> \frac{2x+1}{3x+1}n$. Then, $n_1 + n_2 <(1-\frac{2x+1}{3x+1})n=\frac{x}{3x+1}n$. We obtain:
\begin{align*}
|D_1'| &\leq\frac{|S|+n+n_1}{2}\\
&\leq \frac{|S|+n+n_1+n_2}{2}\\
&\leq \frac{n_1+n_2}{2x}+\frac{n}{2}+\frac{n_1+n_2}{2}\\
&< \frac{n}{6x+2}+\frac{n}{2}+\frac{x}{6x+2}n\\
&= \frac{2x+1}{3x+1}n\;,
\end{align*}
and the proof is finished.
\end{proof}

\section{Applying Theorem~\ref{thm:extended-general-method} to general twin-free digraphs}\label{sec:generalbound}

We first apply the method of Section~\ref{sec:general-method} for source-free digraphs.

\begin{proposition}\label{prop:DS-S/2-parts}
Any source-free digraph $G$ has a minimum-size dominating set $S$ with at least $|S|/2$ distinct parts in the $S$-partition of $V(G)\setminus S$.
\end{proposition}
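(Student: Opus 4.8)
The plan is to take $S$ to be a minimum-size dominating set of $G$ that, among all minimum-size dominating sets, maximizes the number of parts of the $S$-partition $\mathcal{P}_S$, and to show that this $S$ already has at least $|S|/2$ parts. Equivalently, one can phrase this as a local-search argument: start from any minimum dominating set, and as long as it has fewer than $|S|/2$ parts, replace it by another minimum dominating set with strictly more parts; the plan is to prove that such a replacement always exists.

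First I would record the easy contribution to the count. Since $S$ is minimum, it is in particular a minimal dominating set, so $S\setminus\{v\}$ fails to dominate $G$ for every $v\in S$. Write $S=A\cup B$, where $A$ is the set of vertices of $S$ possessing an $S$-external private neighbour and $B=S\setminus A$. For $v\in B$, the absence of an external private neighbour means that no vertex other than $v$ can become undominated when $v$ is removed, so $v$ itself must be undominated by $S\setminus\{v\}$; that is, $v$ has no in-neighbour in $S$, and since $G$ is source-free, $v$ then has an in-neighbour outside $S$. On the other hand, each $v\in A$ gives rise to the part $\{w\notin S : N^-(w)\cap S=\{v\}\}$ of $\mathcal{P}_S$, which is nonempty (it contains the private neighbour) and, for distinct $v$, distinct (the in-neighbourhood labels $\{v\}$ differ). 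Hence $\mathcal{P}_S$ has at least $|A|$ parts, and if $|A|\ge |S|/2$ we are done.

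So assume $|A|<|S|/2$, hence $|B|>|S|/2$, and in particular $|B|$ exceeds the number of parts of $\mathcal{P}_S$. Map each $b\in B$ to a part of $\mathcal{P}_S$ containing one of its (necessarily external) in-neighbours; by the pigeonhole principle two distinct vertices $b_1,b_2\in B$ are sent to the same part $P$, say with external in-neighbours $u_1$ of $b_1$ and $u_2$ of $b_2$ both lying in $P$. I would then consider $S'=(S\setminus\{b_1\})\cup\{u_1\}$: it has the same size as $S$, and it is again dominating, because $u_1$ dominates $b_1$ and, since $b_1$ has no external private neighbour, every out-neighbour of $b_1$ outside $S$ retains an in-neighbour in $S\setminus\{b_1\}\subseteq S'$; thus $S'$ is another minimum-size dominating set. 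The crux, and the step I expect to be the main obstacle, is to show that $\mathcal{P}_{S'}$ has strictly more parts than $\mathcal{P}_S$, contradicting the choice of $S$. Here one tracks how parts change when $b_1$ is deleted from $S$ and $u_1$ is inserted: deleting $b_1$ can only merge parts whose labels differ exactly in $b_1$, inserting $u_1$ refines parts according to whether $u_1$ points to their vertices, and $b_1$ itself becomes a new vertex outside $S'$ whose in-neighbourhood in $S'$ is exactly $\{u_1\}$, forming a genuinely new part (no part of $\mathcal{P}_S$ had label $\{b_1\}$, since $b_1\in B$). Turning this bookkeeping into a net gain of at least one part, possibly after selecting $b_1$ or $u_1$ more carefully (for instance using the second pair $b_2,u_2$, or imposing a secondary extremal condition such as minimizing $|B|$ among the part-maximizing minimum dominating sets), is where the real work lies; the remainder of the argument is routine.
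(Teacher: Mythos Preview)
Your setup matches the paper's: choose a minimum dominating set $S$ maximizing the number of parts, split $S$ into the vertices with an $S$-external private neighbour (your $A$, the paper's $S_1$) and the rest ($B$), and note that each $b\in B$ has no in-neighbour in $S$. Two issues remain.

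The first is a small logical slip: from $|A|<|S|/2$ alone you cannot infer that $|B|$ exceeds the number of parts, since you have only shown that the number of parts is \emph{at least} $|A|$. You must also assume, for contradiction, that the number of parts is below $|S|/2$; then indeed $|B|>|S|/2>\text{(number of parts)}$.

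The second is the genuine gap, which you yourself flag: the single-vertex swap $S'=(S\setminus\{b_1\})\cup\{u_1\}$ need not increase the part count. Removing $b_1$ can merge arbitrarily many pairs of parts (any two whose labels differ only in $b_1$), and inserting $u_1$ need not compensate; the pigeonhole companion $(b_2,u_2)$ does not obviously rescue this either. The paper sidesteps this bookkeeping with one additional idea. Inside $B$ it selects a \emph{minimum} subset $S_2$ dominating all external vertices that have at least two in-neighbours in $S$; then $|S_2|$ is at most the number of such ``non-private'' parts, so the part count is in fact at least $|S_1|+|S_2|$, not just $|S_1|$. Setting $S_3=S\setminus(S_1\cup S_2)$, one now swaps \emph{all of $S_3$ at once}: pick for each $x\in S_3$ an external in-neighbour $f(x)$ (this is injective, else $|S|$ could be decreased), and let $S'=S_1\cup S_2\cup f(S_3)$. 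This is still a minimum dominating set because $S_1\cup S_2$ already dominates $V(G)\setminus S$ and each $f(x)$ dominates $x$; and now the $|S_3|>|S|/2$ vertices of $S_3$ sit outside $S'$ as private neighbours of the $f(x)$'s, yielding more than $|S|/2$ parts and the desired contradiction. The introduction of $S_2$ is precisely what makes the global swap clean; without it your local swap does not close.
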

\begin{proof}
Let $S$ be a minimum-size dominating set. We choose $S$ with a maximum number of parts in the partition $\mathcal P(S)$ of $V(G)\setminus S$.
  
Let $S_1$ be the set of vertices of $S$ that have at least one $S$-external private neighbour. Next, we choose $S_2$ as a minimum-size set of vertices of $S\setminus S_1$ that dominates all the vertices of $(V(G)\setminus S)\setminus N^+(S_1)$ (that is, $S_2$ dominates those vertices that are not in $S$ and are not an $S$-external private neighbour of any vertex in $S$). Note that $S_1\cap S_2=\emptyset$ and $S_1\cup S_2$ dominates $V(G)\setminus S$. Finally, we let $S_3=S\setminus (S_1\cup S_2)$.

Note that $|S_2|$ is at most the number of parts of $\mathcal P(S)$ containing vertices with at least two in-neighbours in $S$, since a vertex of $S$ suffices to dominate the vertices of each such part. Therefore, the number of parts in $\mathcal P(S)$ is at least $|S_1|+|S_2|$.

We will now show that we have $|S_1|+|S_2|\geq |S|/2$, which would imply the statement. Towards a contradiction, we assume that $|S_1|+|S_2|<|S|/2$, that is, $|S_3|>|S|/2$.

Then, no vertex $x$ of $S_3$ has an in-neighbour in $S$, for otherwise $S\setminus\{x\}$ would be a dominating set, contradicting the minimality of $S$. Since $G$ is source-free, $x$ has an in-neighbour in $V(G)\setminus S$. Let $f(x)$ be one arbitrary in-neighbour of $x$, and let $S_4=\{f(x)~|~x\in S_3\}$. The function $f$ is necessarily injective: if we had $f(x_1)=f(x_2)=y$ for two distinct vertices $x_1$ and $x_2$ of $S_3$, then the set $S\setminus\{x_1,x_2\}\cup\{y\}$ would be a smaller dominating set than $S$. Thus $|S_4|=|S_3|$.

Now, we let $S'=S_1\cup S_2\cup S_4$. Clearly, $S'$ is a dominating set of $G$ of size $|S|$. Moreover, every vertex $x$ of $S_3$ is an $S'$-external private neighbour of $f(x)$. 
Thus, the partition $\mathcal P(S')$ of $V(G)\setminus S'$ has at least $|S_3|$ parts, which is more than $|S|/2=|S'|/2$. This contradicts the choice of $S$.

Thus, as claimed, we have $|S_1|+|S_2|\geq |S|/2$, which concludes the proof.
\end{proof}

We remark that the bound of Proposition~\ref{prop:DS-S/2-parts} is tight by considering any digraph consisting only of vertex-disjoint directed triangles.

We obtain the immediate consequence of Proposition~\ref{prop:DS-S/2-parts} and Theorem~\ref{thm:extended-general-method}.

\begin{corollary}\label{coro:source-free-twin-free}
  For any source-free and twin-free digraph $G$ of order $n$, we have $\LD(G)\leq 4n/5$. If moreover $G$ is quasi-twin-free, then $\LD(G)\leq 3n/4$.
\end{corollary}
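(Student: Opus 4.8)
The plan is to derive Corollary~\ref{coro:source-free-twin-free} as a direct consequence of Proposition~\ref{prop:DS-S/2-parts} and Theorem~\ref{thm:extended-general-method}, simply by choosing the right value of the parameter $x$ and checking that the hypotheses line up. Concretely, I would first observe that the only hypothesis of Theorem~\ref{thm:extended-general-method} that needs to be supplied is a dominating set $S$ whose $S$-partition of $V(G)\setminus S$ has at least $x\cdot|S|$ parts for some $0<x\leq 1$. Proposition~\ref{prop:DS-S/2-parts} delivers exactly this for source-free digraphs: it produces a (minimum-size) dominating set $S$ whose $S$-partition has at least $|S|/2$ parts. So I would take $x=1/2$, which indeed satisfies $0<x\leq 1$.

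Next I would simply substitute $x=1/2$ into the two conclusions of Theorem~\ref{thm:extended-general-method}. For the general twin-free bound, $\frac{2x+1}{3x+1}n = \frac{2\cdot\frac12+1}{3\cdot\frac12+1}n = \frac{2}{5/2}n = \frac{4n}{5}$, giving $\LD(G)\leq 4n/5$. For the quasi-twin-free strengthening, $\frac{x+1}{2x+1}n = \frac{\frac12+1}{2\cdot\frac12+1}n = \frac{3/2}{2}n = \frac{3n}{4}$, giving $\LD(G)\leq 3n/4$. Since $G$ is assumed twin-free in both cases (and additionally quasi-twin-free in the second), the hypotheses of Theorem~\ref{thm:extended-general-method} are met, and the corollary follows.

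There is essentially no obstacle here; the statement is a corollary precisely because all the work was done in establishing Proposition~\ref{prop:DS-S/2-parts} and Theorem~\ref{thm:extended-general-method}. The only point requiring a moment's care is the bookkeeping: one must make sure that the dominating set $S$ provided by Proposition~\ref{prop:DS-S/2-parts} is the \emph{same} object to which Theorem~\ref{thm:extended-general-method} is then applied, and that ``at least $|S|/2$ parts'' is literally ``at least $x\cdot|S|$ parts'' with $x=1/2$. One should also note that Theorem~\ref{thm:extended-general-method} internally replaces $S$ by a maximal superset with the same parts-count property, which is harmless since the property ``$\mathcal P_S$ has at least $\frac12|S|$ parts'' is exactly the hypothesis needed, and enlarging $S$ only helps. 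Thus the proof is a two-line deduction, and I would write it as such.
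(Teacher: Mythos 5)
Your proposal is correct and is exactly the paper's argument: the corollary is stated as an immediate consequence of Proposition~\ref{prop:DS-S/2-parts} and Theorem~\ref{thm:extended-general-method} with $x=1/2$, and your substitutions $\frac{2x+1}{3x+1}=\frac{4}{5}$ and $\frac{x+1}{2x+1}=\frac{3}{4}$ are the whole content of the deduction.
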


One can extend this result to twin-free digraphs with sources (note that there can only be one source, since multiple sources would be mutual open twins).

\begin{corollary}\label{coro:twin-free}
  Let $G$ be a twin-free digraph of order $n\geq 3$. Then $\LD(G)\leq 4n/5+1$. If moreover $G$ is quasi-twin-free, then $\LD(G)\leq 3n/4+1$.
\end{corollary}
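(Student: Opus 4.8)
\textbf{Proof proposal for Corollary~\ref{coro:twin-free}.}

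The plan is to reduce the case of a twin-free digraph $G$ with a source to the source-free case already handled in Corollary~\ref{coro:source-free-twin-free}. As noted in the statement, a twin-free digraph has at most one source, since two sources would have identical (empty) open in-neighbourhoods and hence be open twins. So I would split into two cases. If $G$ has no source, Corollary~\ref{coro:source-free-twin-free} gives $\LD(G)\leq 4n/5 \leq 4n/5+1$ directly (and similarly $\LD(G) \leq 3n/4 \leq 3n/4 + 1$ in the quasi-twin-free case), so there is nothing to do.

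The interesting case is when $G$ has exactly one source $s$. The natural move is to consider $G' = G - s$, the digraph on $n-1$ vertices obtained by deleting $s$. First I would check that $G'$ is source-free: any vertex of $G'$ that became a source would have had $s$ as its unique in-neighbour in $G$, but then it and $s$ are not twins in $G$, yet I need to rule this out — actually such a vertex is fine as long as it is genuinely source-free; let me instead argue that if $G'$ has a source $v$, then $N^-_G(v) \subseteq \{s\}$, and since $v$ is not a source in $G$ (as $s$ is the \emph{only} source), $N^-_G(v) = \{s\}$; but there could be several such $v$, and they would all be open twins of each other in $G$ (in-neighbourhood exactly $\{s\}$), contradicting twin-freeness unless there is at most one. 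So $G'$ may still have (at most) one source. To handle this cleanly, I would instead delete \emph{all} such vertices, or better: iterate. Actually the cleanest route: let $G'$ be obtained from $G$ by deleting $s$ together with any vertex whose only in-neighbour in $G$ was $s$ (there is at most one such vertex by twin-freeness), so at most two vertices are removed, leaving a source-free digraph $G''$ on at least $n-2$ vertices. Then I would also need $G''$ to be twin-free — deleting vertices can only merge in-neighbourhoods if the deleted vertex distinguished a pair, but since $s$ was a source it is nobody's in-neighbour of record in a distinguishing role except... hmm, $s$ \emph{is} potentially an in-neighbour of many vertices, so deleting $s$ could create twins in $G''$.

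Because of that subtlety, the robust approach is the one hinted at by the phrasing "by adding one to these bounds": do \emph{not} delete $s$ and re-apply the corollary to a smaller digraph, but instead apply the general machinery of Theorem~\ref{thm:extended-general-method} directly to $G$, being careful about the source. Concretely: $s$ lies in every dominating set of $G$, so take a dominating set $S$ of $G$ with $s \in S$; then $S \setminus \{s\}$ dominates $V(G) \setminus (\{s\} \cup N^+(s))$, but more to the point, run the argument of Proposition~\ref{prop:DS-S/2-parts} on the source-free digraph $G - s$ to get a dominating set $S_0$ of $G-s$ with at least $|S_0|/2$ parts, set $S = S_0 \cup \{s\}$, which dominates $G$ and whose $S$-partition of $V(G)\setminus S$ still has at least $|S_0|/2 \geq (|S|-1)/2 \geq |S|/2 - 1/2$ parts — this isn't quite $x|S|$ for $x = 1/2$, off by a constant. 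Tracking this constant through the two optimizations $D_1'$ (or $D_1$) versus $D_2$ in the proof of Theorem~\ref{thm:extended-general-method} is exactly what produces the additive $+1$: one replays that proof with "at least $|S|/2 - 1/2$ parts" in place of "at least $x|S|$ parts", and the inequalities go through with an extra $+1$ slack. So the key steps in order are: (1) handle the no-source case trivially via Corollary~\ref{coro:source-free-twin-free}; (2) in the one-source case, apply Proposition~\ref{prop:DS-S/2-parts} to $G-s$ and add $s$ back to the dominating set; (3) replay the computation in the proof of Theorem~\ref{thm:extended-general-method} with this slightly-off parameter, absorbing the discrepancy into an additive constant; (4) check the bookkeeping gives exactly $4n/5+1$ (resp.\ $3n/4+1$).

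The main obstacle I anticipate is the twin-freeness bookkeeping around the source: one must be sure that working with $G-s$ doesn't require $G-s$ itself to be twin-free (it need not be, so the cleaner formulation keeps $s$ in $S$ and never actually passes to $G-s$ as a standalone object — only its dominating-set structure is used, and Proposition~\ref{prop:DS-S/2-parts} only needs source-freeness, not twin-freeness, which is why it is stated without a twin-freeness hypothesis). The second, milder obstacle is verifying that carrying the additive $1/2$ through the chain of inequalities in Theorem~\ref{thm:extended-general-method}'s proof really does cost at most $+1$ in the end and not more; this is routine but needs care with the two branches (quasi-twin-free vs.\ general) and with which of $D_1, D_1', D_2$ is the binding constraint. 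The requirement $n \geq 3$ is presumably there to make the small cases (and the $\frac{2(n-2)}{3}$-type lower bound constructions) well-defined and to ensure $G-s$ is nonempty and has an arc; I would note it is used only to avoid degenerate tiny digraphs.
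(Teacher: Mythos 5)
There is a genuine gap at the heart of your ``robust approach'': you apply Proposition~\ref{prop:DS-S/2-parts} to $G-s$ while calling it ``the source-free digraph $G-s$'', but $G-s$ need not be source-free --- as you yourself observed two paragraphs earlier, any vertex whose only in-neighbour in $G$ is $s$ becomes a source of $G-s$ (and twin-freeness only limits the number of such vertices to one; it does not exclude them). A directed path $s\to v_1\to\cdots\to v_{n-1}$ is twin-free with unique source $s$, and deleting $s$ creates a new source; iterating the deletion cascades all the way down, so neither ``delete $s$'' nor ``delete $s$ and the at most one vertex dominated only by $s$'' yields a source-free digraph. Since Proposition~\ref{prop:DS-S/2-parts} genuinely uses source-freeness (it needs every vertex of $S_3$ to have an in-neighbour outside $S$), your step (2) does not produce the dominating set your step (3) requires. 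The rest of your plan is sound: if one \emph{did} have a dominating set $S$ of $G$ whose $S$-partition has at least $(|S|-1)/2$ parts, the maximality argument and the final computations in Theorem~\ref{thm:extended-general-method} do go through with the claimed additive $+1$ (in fact with room to spare). So the missing piece is precisely the construction of such an $S$ in the presence of a source, and that is the step your write-up talks itself out of and then back into without resolving.

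The paper resolves it by going in the opposite direction: instead of removing $s$, it \emph{adds} arcs into $s$. Since there are only $n$ in-neighbourhoods $N^-(x)$ but (for $n\geq 3$) more than $n$ nonempty subsets of $V(G)\setminus\{s\}$, one can pick a nonempty $X\subseteq V(G)\setminus\{s\}$ that is not the in-neighbourhood of any vertex, and let $G'$ be $G$ with all arcs from $X$ to $s$ added. Then $G'$ is source-free and still twin-free (no other vertex has in-neighbourhood $X$), Corollary~\ref{coro:source-free-twin-free} gives a locating-dominating set $D'$ of $G'$ with $|D'|\leq 4n/5$, and $D'\cup\{s\}$ is locating-dominating in $G$ because the in-neighbourhoods of vertices other than $s$ are unchanged; the $+1$ is just the vertex $s$. (For the quasi-twin-free bound one takes $X$ avoiding the at most $2n$ sets $N^-(x)\setminus\{s\}$ and $N^-[x]\setminus\{s\}$.) This also explains the hypothesis $n\geq 3$: it guarantees the existence of the set $X$. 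If you want to salvage your route instead, you would need to prove a one-source variant of Proposition~\ref{prop:DS-S/2-parts} directly on $G$ (keeping $s$ in the dominating set and handling the case $s\in S_3$ separately), which is doable but is real additional work, not bookkeeping.
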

\begin{proof}
  Let $s$ be the unique source of $G$.
  Let $\mathcal I$ be the collection of all subsets $N^-(x)$ for $x\in V(G)$.
  The set $\mathcal I$ has order $n$ and thus there is a non-empty subset $X$ of $V(G)\setminus \{s\}$ that is not in $\mathcal I$, and thus $X$ is not the open in-neighbourhood of any vertex in $G$.
    Let $G'$ be the graph obtained from $G$ in which we add all the arcs between $X$ and $s$.
    The graph $G'$ is now source-free and twin-free. By Corollary~\ref{coro:source-free-twin-free}, there is a locating-dominating set $D'$ of $G'$ of size at most $4n/5$. Then $D=D'\cup \{s\}$ is a locating-dominating set of $G$. Indeed, all the vertices are dominated and two vertices not in $D$ are still located by $D'$. Thus $\LD(G)\leq 4n/5+1$.

    For the second part, we do the same reasonning but with $\mathcal I$ containing all the subsets $N^-[x]\setminus \{s\}$ and $N^-(x)\setminus \{s\}$, for $x\in V(G)$. There are at most $2n$ such sets and thus again there is a nonempty set $X$ such that adding the arcs between $x$ and $s$ does not create twins or quasi-twins in $G$. The end of the proof is the same as in the first part.
    \end{proof}

\section{Tournaments}\label{sec:tournaments}

It is clear that there are no twins in tournaments. A weaker version of the method of Section~\ref{sec:general-method} was applied in~\cite{Shahrzad}, yielding the bound $\LD(T)\leq 5n/6$ for any tournament $T$. This was proved by showing that any tournament $T$ has a dominating set $S$ with at least $|S|$ parts in the $S$-partition of $V(T)\setminus S$.

Note that our strengthened version of this method yields the better bound $\LD(T)\leq 4n/5+1$, via Corollary~\ref{coro:twin-free}.

Nevertheless, using a different technique, we next prove a much stronger bound, which, moreover, is tight. We first prove this bound in transitive tournaments (for which it is actually the exact value).

\begin{proposition}\label{transitivetourn}
For a transitive tournament $T$ of order $n$, we have $\LD(T)=\lceil\frac{n}{2}\rceil$ and $\SEP(T)=\lfloor \frac{n}{2} \rfloor$.
\end{proposition}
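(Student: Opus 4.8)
The plan is to work directly with the structure of a transitive tournament. Label the vertices $v_1, v_2, \ldots, v_n$ so that there is an arc from $v_i$ to $v_j$ exactly when $i < j$; thus $N^-(v_j) = \{v_1, \ldots, v_{j-1}\}$ and in particular $v_1$ is the unique source and $v_n$ the unique sink. The key observation is that for any set $S$, the in-neighbourhood of a vertex $v_j$ within $S$ is simply $S \cap \{v_1, \ldots, v_{j-1}\}$, i.e.\ the initial segment of $S$ below $v_j$. So two vertices $v_i$ and $v_j$ with $i < j$ are located by $S$ if and only if $S$ contains some vertex $v_k$ with $i \le k < j$; equivalently, $S$ is locating precisely when it meets every ``gap'' between consecutive non-$S$ vertices, and it is dominating precisely when additionally $v_1 \in S$ (to dominate the source) — more carefully, $S$ dominates $v_j$ iff $v_j \in S$ or $S \cap \{v_1,\ldots,v_{j-1}\} \ne \emptyset$.

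For the lower bounds I would argue that a locating set cannot contain two consecutive omitted vertices $v_i, v_{i+1}$, because nothing separates them; hence among any two consecutive vertices at least one lies in $S$, forcing $|S| \ge \lfloor n/2 \rfloor$. For a locating-dominating set we additionally need $v_1 \in S$: if $v_1 \notin S$ then $v_1$ is undominated. Combining ``at least one of each consecutive pair'' with ``$v_1 \in S$'' gives $|D| \ge \lceil n/2 \rceil$ (the extra $+1$ over $\lfloor n/2\rfloor$ comes in exactly when $n$ is odd, or when $n$ is even and one checks that taking the even-indexed vertices fails domination so one must shift). For the matching upper bounds: $S = \{v_1, v_3, v_5, \ldots\}$, the odd-indexed vertices, has size $\lceil n/2 \rceil$, contains $v_1$, and meets every consecutive pair, so it is locating-dominating — giving $\LD(T) \le \lceil n/2\rceil$. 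For the location number, $S = \{v_2, v_4, v_6, \ldots\}$ (the even-indexed vertices) has size $\lfloor n/2 \rfloor$ and still meets every gap between consecutive omitted vertices (the omitted vertices are $v_1, v_3, v_5, \ldots$, and between $v_{2k-1}$ and $v_{2k+1}$ sits $v_{2k} \in S$), so it is locating; hence $\SEP(T) \le \lfloor n/2 \rfloor$. Together with Proposition~\ref{prop:sep=n-1}'s surrounding remarks (or a direct check) and the lower bounds, we get equality in both cases.

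The main technical point to get right is the precise lower bound argument distinguishing $\SEP$ from $\LD$, i.e.\ verifying that the ``no two consecutive omissions'' constraint alone yields $\lfloor n/2\rfloor$ and that adding the domination requirement $v_1 \in S$ bumps this to $\lceil n/2\rceil$. Concretely, suppose $D$ is locating-dominating with $|D| = \lfloor n/2\rfloor$; then $n$ must be even and $D$ must omit exactly one vertex from each pair $\{v_{2k-1}, v_{2k}\}$. If $v_1 \notin D$ we already have a contradiction (undominated source), so $v_1 \in D$, meaning $v_2$ is the omitted vertex of the first pair. One then checks whether a valid configuration of this size exists; the claim $\LD(T) = \lceil n/2\rceil$ asserts it does not, so I would need to show that committing to $v_1 \in D$ and one-per-pair elsewhere inevitably leaves some vertex unlocated or undominated — a short parity/counting argument. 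This careful case split around the endpoints is the only place where one must be attentive; everything else reduces to the clean ``initial segments'' description of in-neighbourhoods.
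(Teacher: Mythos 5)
Your overall approach is the same as the paper's: label the vertices as $v_1,\dots,v_n$ with arcs from lower to higher index, use the odd-indexed vertices for the $\LD$ upper bound and the even-indexed ones for the $\SEP$ upper bound, and get the lower bounds from the fact that two consecutive vertices $v_i,v_{i+1}$ cannot both be omitted (the paper phrases this via quasi-twins and Proposition~\ref{ldtwin-digraphs}; your direct argument is equivalent). The upper bounds and the $\SEP(T)\ge\lfloor n/2\rfloor$ bound are fine as written.

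The one genuine problem is your final paragraph, where you try to make the $\LD$ lower bound concrete. First, the assertion that a locating-dominating set of size $\lfloor n/2\rfloor$ forces $n$ to be even is unjustified (for $n$ odd, the $(n-1)/2$ disjoint pairs $\{v_1,v_2\},\dots,\{v_{n-2},v_{n-1}\}$ are consistent with $|D|=(n-1)/2$, so parity of $n$ is not immediately determined). Second, the plan to ``show that no valid configuration of this size exists'' is the wrong framing for $n$ even: there $\lceil n/2\rceil=\lfloor n/2\rfloor$ and such sets \emph{do} exist (e.g.\ the odd-indexed vertices), so nothing extra needs to be ruled out --- the $\SEP$ lower bound already gives $\LD(T)\ge\lfloor n/2\rfloor=\lceil n/2\rceil$ since every locating-dominating set is locating. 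The only case needing the ``$+1$'' is $n$ odd, and there the clean argument (which is exactly the paper's) is to choose the disjoint consecutive pairs so as to avoid $v_1$: the pairs $\{v_i,v_{i+1}\}$ with $i$ even, $i<n$, number $\lfloor (n-1)/2\rfloor$, each contributes one vertex of $D$, and $v_1\in D$ is an additional vertex, giving $|D|\ge\lfloor(n-1)/2\rfloor+1=\lceil n/2\rceil$. With that substitution your proof is complete; as it stands, the decisive step is both left as a to-do and pointed in a direction that would not work.
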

\begin{proof}
Let $V(T)=\{v_1,\ldots,v_n\}$ where $v_i$ has each $v_j$ with $j>i$ as its out-neighbour. 
To see that $\gamma(T)\leq\lceil\frac{n}{2}\rceil$, consider the dominating set containing all $v_i$'s with $i$ odd. Two vertices $v_i$ and $v_j$ ($i<j$) not in $D$ are located by $v_{j-1}$.
Now, let $D$ be a locating-dominating set of $T$. We need $v_1$ in $D$, otherwise it is not dominated. Then, every two vertices $v_i$, $v_{i+1}$ are quasi-twins, thus by Proposition \ref{ldtwin-digraphs}, one of them needs to be in $D$. Consider the sets $\{v_{i},v_{i+1}\}$ with $i$ even and $i<n$: they are disjoint, and each contains one vertex of $D$. There are $\lfloor\frac{n-1}{2}\rfloor$ such sets, so we have 
\begin{align*}
|D|&\geq\left\lfloor\frac{n-1}{2}\right\rfloor+1=\left\lceil\frac{n}{2}\right\rceil.
\end{align*}

For locating sets, if $n$ is odd, consider the set $L$ that contains all $v_i$'s with $i$ even. Then $L$ is a locating set of size $(n-1)/2$. Since $\SEP(T)\geq \LD(T)-1$, this set is optimal.
If $n$ is even, at least one vertex in any set  $\{v_{i},v_{i+1}\}$ with  $i$ odd must be in the locating set. This gives at least $n/2$ vertices and thus $\SEP(T)=\LD(T)$.
\end{proof}

We now extend the upper bound to any tournament.

\begin{theorem}\label{thm:tournaments-n/2}
 For any tournament $T$ of order $n$, we have $\LD(T)\leq \lceil n/2 \rceil$ and $\SEP(T)\leq \lfloor n/2 \rfloor$.
\end{theorem}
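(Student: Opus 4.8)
The plan is to prove both inequalities together by strong induction on $n$, linking them through $\LD(T)-1\le\SEP(T)\le\LD(T)$. The odd case is immediate: for any vertex $v$, if $D'$ is a locating-dominating set of $T-v$ then $D'\cup\{v\}$ is one of $T$ (every pair located in $T-v$ stays located, and $v$ is now in the set), so $\LD(T)\le\LD(T-v)+1\le\lceil(n-1)/2\rceil+1=\lceil n/2\rceil$ when $n$ is odd; once the even case is settled, $\SEP(T)\le\LD(T)\le n/2$ handles $\SEP$ for even $n$, and for odd $n$ one obtains $\SEP(T)\le(n-1)/2$ from the decomposition below (choosing the split vertex so that both sides have odd order) or by discarding one vertex of an optimal locating-dominating set. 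So the content is the even case.

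The engine for even $n$ is a decomposition at a single vertex $v$: put $v$ in the set, take by induction a minimum locating-dominating set $D_-$ of $T[N^-(v)]$ and a minimum \emph{locating} set $L_+$ of $T[N^+(v)]$ (a locating set is enough on $N^+(v)$ because $v$ already dominates all of $N^+(v)$). Then $D=\{v\}\cup D_-\cup L_+$ is locating-dominating for $T$: pairs inside $N^-(v)$ are separated by $D_-$, pairs inside $N^+(v)$ by $L_+$, and any pair split between $N^-(v)$ and $N^+(v)$ is separated by $v$ itself, while domination is clear. Its size is $1+\LD(T[N^-(v)])+\SEP(T[N^+(v)])\le 1+\lceil m/2\rceil+\lfloor(n-1-m)/2\rfloor$ with $m=|N^-(v)|$, and since $n-1$ is odd this evaluates to exactly $n/2$ whenever $m$ is even. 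Thus the decomposition finishes every even tournament having a vertex of even in-degree; in particular it handles all tournaments with $n\equiv2\bmod4$, where a parity count of $\sum_v d^-(v)=\binom{n}{2}$ shows the in-degrees cannot all be odd.

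For the remaining even tournaments (all in-degrees odd, which forces $n\equiv0\bmod4$ and $n\ge8$) I would use auxiliary structural reductions, each producing a set of size $n/2$: (i) a source $s$ gives $\LD(T)\le 1+\SEP(T-s)\le 1+\lfloor(n-1)/2\rfloor=n/2$; (ii) a vertex $v$ of in-degree $1$ with in-neighbour $u$: delete $\{u,v\}$, take a minimum locating-dominating set $D''$ of the $(n-2)$-vertex remainder, and add $u$ back — this works since $v$'s only in-neighbour is $u\in D''\cup\{u\}$, whereas every other vertex outside $D''$ has a non-empty in-neighbourhood inside $D''$ (so is distinguished from $v$) and the $D''$-patterns keep these vertices mutually separated; (iii) symmetrically, a vertex $u$ of out-degree $1$ with out-neighbour $v$: delete $\{u,v\}$ and add $u$ back, using that $u$ is an in-neighbour of $v$ and of no other vertex outside the set, hence separates $v$ from all of them. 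These cover, for instance, every $T$ having a sink $z$ such that $T-z$ also has a sink (that sink then has out-degree $1$ in $T$).

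The main obstacle is the genuinely residual case: $n$ even with $n\equiv0\bmod4$, every in- and out-degree at least $2$, and all in-degrees odd — precisely where the clean decomposition overshoots by one. I expect to handle it by a finer choice: pair $v$ with the tournament $T[N^-[v]]$, which has even order $m+1$ and in which $v$ is a sink, and exploit that a sink of a tournament lies in some minimum locating-dominating set unless adding it strictly increases the size, in which case $\LD(T[N^-[v]])=\LD(T[N^-(v)])$ supplies the needed saving; alternatively, peel off two vertices at once along a Hamiltonian path tailored to such tournaments. Making this residual case go through while keeping the bound at exactly $n/2$ is the delicate point of the argument.
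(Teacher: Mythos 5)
Your single-vertex decomposition is exactly the paper's starting point, and your parity count showing that a vertex of even in-degree exists when $n\equiv 2\bmod 4$ is correct, as are your reductions for sources and for vertices of in- or out-degree $1$. But the proof has a genuine gap, and it sits exactly where you say it does: the case where $n$ is even and every in-degree is odd (equivalently, every out-degree is even) is not handled — you only sketch two vague ideas ("a sink lies in some minimum locating-dominating set unless\dots", "peel off two vertices along a Hamiltonian path") and neither is carried out. This residual case is not a fringe technicality; it is the heart of the theorem. The paper resolves it by escalating the decomposition: if the $\{x\}$-partition fails for every $x$, it passes to the $\{x,y\}$-partition for an arc $x\to y$ (four parts $V_0,V_x,V_y,V_{xy}$), and if that too fails, to the $\{x,y,z\}$-partition for a directed triangle (which exists since transitive tournaments are the base case). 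At each stage a parity argument on out-degrees and on common out-neighbourhoods forces enough odd-sized parts for the inductively obtained locating/locating-dominating sets of the parts to sum to $\lfloor n/2\rfloor$ or $n/2$. Without some such second-level argument your induction does not close.

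A second, related problem: your induction also needs $\SEP(T')\le\lfloor n'/2\rfloor$ for the smaller tournaments $T[N^+(v)]$, including those of odd order, and your treatment of the odd-order $\SEP$ bound is flawed. You propose "choosing the split vertex so that both sides have odd order", i.e.\ a vertex of odd in-degree; but for $n\equiv 1\bmod 4$ a regular tournament has all in-degrees equal to $(n-1)/2$, which is even, so no such vertex exists. The fallback "discard one vertex of an optimal locating-dominating set" is also unjustified, since $\SEP\le\LD$ only gives $\lceil n/2\rceil$, not $\lfloor n/2\rfloor$, and deleting a vertex from a locating-dominating set need not leave a locating set. So the odd-$\SEP$ half of the statement is in the same position as the even-$\LD$ half: it needs the arc- and triangle-partition parity argument (or an equivalent idea) that your proposal is missing.
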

\begin{proof}
  We prove the result by induction. By Proposition \ref{transitivetourn}, this is true for any transitive tournament, and in particular if $n\leq 2$. Let $n\geq 3$ and  assume that this is true for any tournament of order $k<n$.

  Let $T$ be a tournamement of order $n$ that is not transitive. We first find a locating set of size $\lceil n/2 \rceil$.

  Let $x$ be any vertex. Let $V_0=N^-(x)$ and $V_x=N^+(x)$ be the $\{x\}$-partition of $V(T)\setminus\{x\}$. Let $n_0$ and $n_x$ be the sizes of $V_0$ and $V_x$, respectively. 
  Let $S_0$ and $S_x$ be two optimal locating sets of the tournaments induced by $V_0$ and $V_x$, respectively.
  By induction, $S_0$ and $S_x$ have size at most $\lfloor n_0/2 \rfloor$ and $\lfloor n_x/2 \rfloor$.
  Consider the set $S=S_0\cup S_x \cup \{x\}$. It is a locating set of $T$ since any pair $u, v$ with $u \in V_0$ and $v \in V_x$ is located by $x$. Its size is at most $\lfloor n_0/2 \rfloor+\lfloor n_x/2 \rfloor+1$ which is equal to $\lfloor n/2 \rfloor$ if $n_0$ or $n_x$ is odd. In this case we are done, so we can assume that both $n_0$ and $n_x$ are even and thus $n$ is odd. Since we chose an arbitrary vertex $x$, one can also assume that all the vertices have even out-degree and in-degree (if not, we are done).

  Consider now two arbitrary vertices $x$ and $y$ with an arc from $x$ to $y$. Let $V_x$, $V_y$, $V_{xy}$, $V_0$ be the $\{x,y\}$-partition of $V\setminus \{x,y\}$ ($V_x$ contains the vertices that have $x$ but not $y$ in their in-neighbourhood and the other notations follow the same logic). As before, if one takes a minimum locating set in each part of the partition and add $x$ and $y$, one obtains a locating set of $T$. If there are three odd-sized sets among the four sets, one obtains, using the induction hypothesis, a locating set of size at most $\lfloor n/2 \rfloor$.
  Note that if $V_{xy}$ has odd size, then so does $V_y$ since $y$ has even out-degree and all its out-neighbours are in $V_y$ or $V_{xy}$. Since the total number of vertices is odd, there is another odd-sized set among $V_0$ and $V_x$ (which must actually be $V_0$). In total, there are three odd-sized sets among the sets of the partition and thus the locating set has size at most $\lfloor n/2\rfloor$.
  Thus, one can assume that for any pair of vertices of $T$, the number of common out-neighbours is even.

  We now consider three vertices $x$, $y$ and $z$ that induce a directed triangle $x\to y\to z\to x$ (this exists since $T$ is not transitive). Again, we consider the $\{x,y,z\}$-partition of $V(T)\setminus \{x,y,z\}$, we consider a minimum locating set of each part and we add $x$ and $y$ and $z$ to obtain a locating set. If there are four odd-sized sets in the partition, the obtained locating set has, by induction, size at most $\lfloor n/2\rfloor$.
  In fact, this is always the case: if $V_{xyz}$ (the vertices that have $x$, $y$ and $z$ as in-neighbours) is odd-sized, then $V_{xy}$ is also odd-sized since $V_{xy}\cup V_{xyz}$ is exactly the set of common out-neighbours of $x$ and $y$, which is even-sized. In the same way, $V_{yz}$ and $V_{xz}$ are odd-sized and we are done.
  If $V_{xyz}$ is even-sized, then using the same argument, $V_{xy}$, $V_{yz}$, $V_{xz}$ are also even-sized.
  But then, $V_x\cup V_{xy} \cup V_{xz} \cup V_{xyz} \cup \{y\}$ is the out-neighbourhood of $x$ and has even size. Thus $V_x$ is odd-sized, and similarly, $V_y$ and $V_z$ are also odd-sized. Since the order of $T$ is odd, $V_0$ must also be odd-sized and we are done.

  We now prove in a similar way that $\LD(T)=\lceil n/2 \rceil$. If $n$ is odd, this is clear since $\LD(T)\leq \SEP(T) +1\leq \lceil n/2 \rceil$ by the previous result. Thus we assume that $n$ is even.
  We first take an arbitrary vertex $x$ and consider the $\{x\}$-partition $V_0$,$V_x$.
  If one takes a locating-dominating set for $V_0$, a locating set for $V_x$ and adds $x$, one obtains a locating-dominating set. Since $n$ is even, exactly one set among $V_0$ and $V_x$ has odd size.
  If $|V_0|$ is even, by induction, this gives a locating-dominating set of size at most $|V_0|/2+(|V_x|-1)/2+1=n/2$ and we are done. Thus, one can assume that all the vertices have odd in-degree and even out-degree.

  Consider now two arbitrary vertices $x$ and $y$ with an arc from $x$ to $y$ in $T$ and the associated $\{x,y\}$-partition. Taking a locating-dominating set for $V_0$, locating sets for the three other parts and $x$ and $y$ gives a locating-dominating set. If two sets of the partition that are not $V_0$ are odd-sized, this gives a locating-dominating set of size $n/2$, and we are done. So, we can assume that among $V_x$ and $V_{xy}$, exactly one set is odd-sized. Indeed, $x$ has even out-degree and its out-neighbourhood is $V_x\cup V_{xy} \cup \{y\}$. Thus, if $V_y$ is odd-sized, we are done. Hence, we can assume that $V_y$ is even-sized, which implies that $V_{xy}$ is also even-sized.
  In particular, we can now suppose that all pairs of vertices have a common out-neighbourhood that has even size.

  Finally, consider an oriented triangle $x\to y \to z \to x$ and the associated $\{x,y,z\}$-partition. As before, by taking $x$, $y$, $z$, a locating-dominating set of $V_0$ and locating sets in the seven other parts, we obtain a locating-dominating set.
  Since $n$ is even, there is an odd number of odd-sized sets in the partition. If there are three odd-sized sets that are not $V_0$, then the total locating-dominating set has size at most $n/2$ (indeed, if $V_0$ is also odd-sized, then there will be a fourth odd-sized set that is not $V_0$).
  As before, if $V_{xyz}$ is odd-sized, then $V_{xy}$, $V_{xz}$, $V_{yz}$ are also odd-sized and we are done.
  If it is even-sized, since $x$, $y$ and $z$ have even out-degree, we have $V_x$, $V_y$ and $V_z$ that are odd-sized and we are also done.

  Thus, there is always a locating-dominating set of $T$ of size $\lceil n/2\rceil$.
\end{proof}

Transitive tournaments are not the only tight example. For an integer $k\geq 1$, let $T_k$ be the tournament of order $3k$ obtained from a collection $t_1,\ldots, t_k$ of vertex-disjoint directed triangles, with arcs going from all vertices of $t_i$ to all vertices of $t_j$ whenever $i<j$.

\begin{proposition}\label{prop:tourn-disj-triangles}
The tournament $T_k$ of order $n=3k$ satisfies $\LD(T_k)=\lceil n/2\rceil$.
\end{proposition}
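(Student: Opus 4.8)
The upper bound $\LD(T_k)\le\lceil n/2\rceil$ needs nothing new, since $T_k$ is a tournament and this follows directly from Theorem~\ref{thm:tournaments-n/2}. (An explicit optimal locating-dominating set can also be written down: put two vertices of $D$ in $t_1$, two in every odd-indexed triangle, and one in every even-indexed triangle; this uses exactly $\lceil n/2\rceil$ vertices, the triangles receiving only one vertex are pairwise non-consecutive, and checking it is locating-dominating is routine given the in-neighbourhood computations below.) The substance is the matching lower bound, which I would prove by counting the $D$-vertices of each triangle.

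Fix a locating-dominating set $D$ of $T_k$ and set $d_i=|D\cap t_i|$ for $1\le i\le k$; the goal is $\sum_i d_i\ge\lceil 3k/2\rceil$. The key structural remark is that inside each triangle $t_i$, every vertex has exactly one in-neighbour, namely its predecessor on the $3$-cycle, while all of its other in-neighbours lie in $t_1\cup\dots\cup t_{i-1}$. From this I extract three facts. First, $d_i\ge1$ for every $i$: if $d_i=0$, all three vertices of $t_i$ have the same in-neighbourhood $D\cap(t_1\cup\dots\cup t_{i-1})$ inside $D$, so they are either all undominated (when $i=1$) or pairwise unlocated (when $i\ge2$). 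Second, $d_1\ge2$: the vertices of $t_1$ have in-neighbours only inside $t_1$, and no single vertex of a directed triangle dominates the other two.

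The crucial fact is the third: there is no index $i$ with $d_i=d_{i+1}=1$. Suppose such an $i$ exists, with $D$-vertices $v_i\in t_i$ and $v_{i+1}\in t_{i+1}$; let $u_i$ be the out-neighbour of $v_i$ inside $t_i$ and $w_{i+1}$ the in-neighbour of $v_{i+1}$ inside $t_{i+1}$, so that $u_i,w_{i+1}\notin D$. The in-triangle predecessor of $u_i$ is $v_i$, hence $N^-(u_i)\cap D=\{v_i\}\cup\big(D\cap(t_1\cup\dots\cup t_{i-1})\big)$. The in-triangle predecessor of $w_{i+1}$ is the out-neighbour of $v_{i+1}$, which is not in $D$, and $D\cap t_i=\{v_i\}$, hence $N^-(w_{i+1})\cap D=D\cap(t_1\cup\dots\cup t_i)=\{v_i\}\cup\big(D\cap(t_1\cup\dots\cup t_{i-1})\big)$. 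Thus $u_i$ and $w_{i+1}$ are distinct non-$D$ vertices with the same in-neighbourhood in $D$, contradicting that $D$ is locating.

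Putting these together: the set $I=\{i:d_i=1\}$ avoids $1$ by the second fact and contains no two consecutive integers by the third, so it is an independent set in the path on the $k-1$ vertices $2,\dots,k$ and therefore $|I|\le\lceil(k-1)/2\rceil=\lfloor k/2\rfloor$. Using $d_i\ge1$ for all $i$, we get $|D|=\sum_i d_i\ge|I|+2(k-|I|)=2k-|I|\ge 2k-\lfloor k/2\rfloor=\lceil 3k/2\rceil$, which completes the lower bound. The one delicate point is the third fact: one must observe that this fatal coincidence of in-neighbourhoods arises only between \emph{consecutive} triangles — for a later triangle $t_j$ with $j\ge i+2$ the intervening triangle $t_{i+1}$ already adds an extra element to $D\cap(t_1\cup\dots\cup t_{j-1})$ by the first fact — and that discarding $t_1$ from $I$ thanks to the second fact is precisely what sharpens the count from $\lfloor 3k/2\rfloor$ to the exact value $\lceil 3k/2\rceil$.
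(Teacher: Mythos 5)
Your proof is correct and follows essentially the same route as the paper's: each triangle must meet $D$, the first triangle must contain two vertices of $D$ (for domination), and no two consecutive triangles can each contain exactly one vertex of $D$ (since otherwise the out-neighbour of the $D$-vertex in $t_i$ and the in-neighbour of the $D$-vertex in $t_{i+1}$ get identical $D$-in-neighbourhoods). Your independent-set bookkeeping and your explicit verification of the in-neighbourhood coincidence are just more detailed versions of the paper's pairing argument, and the upper bound is, as you note, immediate from Theorem~\ref{thm:tournaments-n/2}.
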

\begin{proof}
  Let $D$ be an optimal locating-dominating set of $T_k$.
  There must be at least one vertex of $D$ in any $t_i$, otherwise the vertices of $t_i$ are not located.
  Furtheremore, there must be two vertices of $D$ in $t_1$ to dominate the three vertices of $t_1$.

  Assume that there are two consecutive triangles, $t_i$ and $t_{i+1}$, each with only one vertex in $D$. Let $d_i$ and $d_{i+1}$ be the vertices of $D$ that are in $t_i$ and $t_{i+1}$. Then, the out-neighbour of $d_i$ in $t_i$ and the in-neighbor of $d_{i+1}$ in $t_{i+1}$ are not located, a contradiction.

  Thus, there must be at least three vertices of $D$ in any two consecutive triangles, and at least two vertices of $D$ in the first triangle, which gives a total of at least $k+\lceil k/2\rceil=\lceil n/2\rceil$ vertices in $D$.
\end{proof}

\section{Twin-free acyclic digraphs}\label{sec:acyclic}

Twins generalize sources (since two sources are twins) but in a twin-free digraph we may have up to one source. This allows us to consider twin-free acyclic digraphs (they have exactly one source) in this section.

We will need the following theorem of Bondy, rephrasesd for our context.

\begin{theorem}[Bondy \cite{bondy}]\label{bondy}
Let $A$, $B$ be two disjoint sets of vertices in a digraph such that the vertices of $B$ have distinct in-neighbourhoods in $A$. Then, there is a subset $L\subseteq A$ of size at most $|B|-1$ such that the vertices of $B$ have distinct in-neighbourhoods in $L$.
\end{theorem}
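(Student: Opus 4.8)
The plan is to recast the statement as the classical ``induced subsets'' theorem of Bondy and give the standard minimal-separator plus spanning-forest argument. First I would translate the digraph language into set systems: to each vertex $b\in B$ associate the set $\widehat b := N^-(b)\cap A\subseteq A$. The hypothesis says that $\{\widehat b : b\in B\}$ is a family of $|B|$ pairwise distinct subsets of the ground set $A$, and the conclusion is that there is $L\subseteq A$ with $|L|\le |B|-1$ such that the traces $\{\widehat b\cap L : b\in B\}$ are still pairwise distinct, i.e.\ the vertices of $B$ have distinct in-neighbourhoods in $L$.

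Next I would choose $L\subseteq A$ to be inclusion-minimal subject to the property that the traces $\widehat b\cap L$ (for $b\in B$) are pairwise distinct; such an $L$ exists because $A$ itself has this property, and one reaches $L$ by repeatedly deleting ``redundant'' elements. Write $c_b := \widehat b\cap L$. For every $x\in L$, minimality forces a collision among the traces on $L\setminus\{x\}$, so there are two vertices $b_x, b'_x\in B$ with $c_{b_x}\ne c_{b'_x}$ but $c_{b_x}\setminus\{x\} = c_{b'_x}\setminus\{x\}$; equivalently $c_{b_x}\mathbin{\triangle} c_{b'_x} = \{x\}$. Form an auxiliary graph $H$ on vertex set $B$ by adding, for each $x\in L$, the edge $\{b_x, b'_x\}$. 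Distinct elements of $L$ produce distinct edges, since the edge $\{b_x,b'_x\}$ determines the symmetric difference $c_{b_x}\mathbin{\triangle} c_{b'_x}=\{x\}$ and hence $x$; so $H$ has exactly $|L|$ edges.

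The key step is to show $H$ is a forest, which yields $|L| = |E(H)|\le |V(H)|-1 = |B|-1$, finishing the proof. Suppose $H$ contained a cycle $b_1 b_2\cdots b_\ell b_1$, where the edge $b_t b_{t+1}$ arises from $x_t\in L$, so $c_{b_t}\mathbin{\triangle} c_{b_{t+1}} = \{x_t\}$ and the $x_t$ are pairwise distinct. Telescoping the symmetric differences around the cycle gives $\emptyset = c_{b_1}\mathbin{\triangle} c_{b_1} = (c_{b_1}\mathbin{\triangle} c_{b_2})\mathbin{\triangle}\cdots\mathbin{\triangle}(c_{b_\ell}\mathbin{\triangle} c_{b_1}) = \{x_1\}\mathbin{\triangle}\cdots\mathbin{\triangle}\{x_\ell\} = \{x_1,\dots,x_\ell\}$, which is nonempty, a contradiction; hence $H$ is acyclic.

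The only delicate point is precisely this acyclicity argument: one must work with the traces $c_b$ on $L$ (not with $\widehat b$, which may still differ outside $L$), and one must record that each edge pins down a single ground-set element, so that the symmetric-difference cancellation around a cycle is clean. Everything else — the existence of a minimal $L$, the per-element witnesses, the edge count — is routine bookkeeping. (An alternative phrasing is an induction on $|A|$: if $|A|\le|B|-1$ take $L=A$, otherwise delete from $A$ any element whose removal preserves distinctness and recurse; but showing such an element exists when $|A|\ge|B|$ again needs the forest argument, so the minimal-$L$ formulation is the most economical.)
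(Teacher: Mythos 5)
Your proof is correct. Note that the paper does not prove this statement at all: it is quoted as a known theorem of Bondy with a citation to the original 1972 paper, so there is no in-text argument to compare against. What you give is a complete and accurate rendition of the standard proof: translate in-neighbourhoods into a set system $\{\widehat b : b\in B\}$ over the ground set $A$, take $L$ inclusion-minimal with distinct traces, extract for each $x\in L$ a witness pair $b_x,b'_x$ with $c_{b_x}\mathbin{\triangle} c_{b'_x}=\{x\}$, and observe that the resulting auxiliary graph on $B$ is acyclic because the symmetric differences telescope around any cycle to $\{x_1,\dots,x_\ell\}\neq\emptyset$. The two points you flag as delicate (working with traces on $L$ rather than the full sets, and the injectivity of $x\mapsto\{b_x,b'_x\}$ so that a cycle really uses pairwise distinct elements of $L$) are indeed the only places where care is needed, and you handle both correctly; the edge count $|L|=|E(H)|\le |B|-1$ then follows from the forest bound.
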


Now we can prove the following. The proof technique is similar, but more complicated, as the one used to prove the same bound for the domination number of twin-free digraphs in~\cite{Shahrzad}.

\begin{theorem}\label{LDacyclic}
If $G$ is a twin-free acyclic digraph of order $n$, then $\LD(G) \leq \lceil\frac{n}{2}\rceil$.
\end{theorem}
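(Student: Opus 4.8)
## Proof proposal

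The plan is to proceed by induction on $n$, using the unique source and peeling off a few vertices near it, mirroring the strategy of the tournament proof (Theorem~\ref{thm:tournaments-n/2}) and the domination analogue from~\cite{Shahrzad}. Since $G$ is acyclic and twin-free, it has exactly one source $s$; let $N^+(s)$ be its set of out-neighbours. The key structural idea is that $G - s$ is again acyclic (deleting a vertex cannot create a cycle), but it need not be twin-free — deleting $s$ may merge the in-neighbourhoods of two vertices that differed only at $s$. So the induction has to be set up to tolerate a bounded number of newly-created twin pairs, and those are exactly pairs $\{u,v\}$ with, say, $s \in N^-(u)$, $s \notin N^-(v)$, and $N^-(u)\setminus\{s\} = N^-(v)$. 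The heart of the argument is to show that we can always locate and dominate $s$ together with a small ``interface'' set and then appeal to induction on a twin-free acyclic subdigraph of order at most $n-2$.

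Concretely, I would look at a minimal vertex (a sink of $G$, or more usefully a vertex $w$ all of whose in-neighbours we can control) and, symmetrically, at the source $s$. First I would handle small/degenerate cases: if $|N^+(s)|$ is small we add $s$ and one out-neighbour to the solution and recurse on the rest. The main case is to pick two vertices $x, y$ playing a role analogous to $x,y,z$ in the tournament proof — here I expect a good choice is $s$ together with a carefully chosen out-neighbour, or a quasi-twin pair created by the acyclic structure — put both of them into the locating-dominating set, delete them, observe that what remains is acyclic of order $n-2$, and argue that it is twin-free (any twin pair in $G - \{x,y\}$ would, together with $x$ or $y$, have been a twin pair or forbidden configuration in $G$). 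Then induction gives a locating-dominating set of size at most $\lceil (n-2)/2\rceil = \lceil n/2\rceil - 1$, and adding $x,y$ overshoots by one, so one must be slightly cleverer: only one of $x,y$ should be ``charged'' to the count, with the other's role absorbed because it either becomes a sink in the residual digraph or is located for free. This parity bookkeeping — making sure each induction step spends exactly one unit of budget per two vertices removed — is the delicate part, exactly as in the tournament proof where the whole argument hinges on chasing odd/even sizes of the parts of the $\{x,y\}$- and $\{x,y,z\}$-partitions.

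For the step where newly-created twins appear, I would invoke Bondy's theorem (Theorem~\ref{bondy}): if we have set aside a set $B$ of at most $k$ vertices that must be pairwise located and whose in-neighbourhoods differ inside an already-chosen set $A$, we can extract a locating subset $L \subseteq A$ of size $|B|-1$, saving one vertex and thereby restoring the $\lceil n/2\rceil$ budget. This is the mechanism that lets us absorb the "$+0$ instead of $+1$" saving needed to close the induction. I would also use Proposition~\ref{ldtwin-digraphs} to force quasi-twin pairs (which the acyclic structure tends to produce near the source) to contribute a vertex to any locating-dominating set, giving matching lower-bound intuition and guiding which vertices to put in $D$.

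The main obstacle I anticipate is precisely the twin-freeness of the residual digraph after deleting $s$ (and possibly one or two more vertices): controlling all the twin pairs that deletion can create, and showing that in every configuration we can choose which vertices to delete so that either the residue is genuinely twin-free, or the few residual twin pairs are disjoint and each costs only ``half a vertex'' to resolve (one vertex per pair, pair size $2$). Getting a clean case analysis on the local structure at $s$ — how many out-neighbours, whether any out-neighbour is a quasi-twin of $s$, whether $s$ has an out-neighbour that is a sink — and checking the parity accounting in each case is where the real work lies; everything else is a routine induction once the right two or three vertices to remove are identified.
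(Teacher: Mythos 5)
Your proposal is a strategy outline rather than a proof, and the two places you yourself flag as ``the delicate part'' are exactly where it breaks down. First, the number of twin pairs created by deleting the source $s$ is not bounded: the pairs $\{u,v\}$ with $N^-(u)=N^-(v)\cup\{s\}$ form a matching between $N^+(s)$ and its complement, and there can be $\Theta(n)$ of them (take $s$ pointing into one vertex of each of many otherwise-identical gadgets). So an induction that removes $s$ plus one or two interface vertices and recurses on a \emph{twin-free} acyclic digraph of order $n-2$ cannot be set up this way, and you give no mechanism for repairing the residue. Second, the budget: putting both removed vertices into $D$ and invoking $\lceil (n-2)/2\rceil$ yields $\lceil n/2\rceil+1$, and the saving you hope to extract from Bondy's theorem is not localized to the two deleted vertices --- the locating subset Bondy provides lives anywhere in $A$ and its members may themselves need to be added to $D$, so it is not clear anything comes for free. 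The tournament proof you are imitating closes its parity argument only because every pair of vertices of a tournament is adjacent, which forces the $\{x\}$-, $\{x,y\}$- and $\{x,y,z\}$-partitions to locate across parts automatically; nothing analogous holds for a general acyclic digraph, where $N^+(s)$ may be a single vertex and the $\{s\}$-partition separates almost nothing.

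The paper avoids induction on $n$ altogether. It stratifies $V(G)$ into levels $L_0=\{s\},L_1,\dots,L_m$ by repeatedly removing sources, then builds $D$ from the deepest level upward: at level $i$ it takes the $L_{i-1}$-partition of the not-yet-used part $L_i'$ of $L_i$ into classes $P_1,\dots,P_k$, applies Bondy's theorem once to find at most $k$ vertices of $L_{i-1}$ that dominate and locate one representative per class, and applies Bondy again inside each class (twin-freeness guarantees the members of $P_j$ have distinct in-neighbourhoods in earlier levels) to locate $P_j$ with $|P_j|-1$ further vertices. The count $|D_i|\le k+\sum_j\left(|P_j|-1\right)=|L_i'|$ charges each chosen vertex to a distinct non-chosen vertex; this amortized, level-by-level bookkeeping is exactly the global mechanism your local two-vertices-at-a-time scheme is missing. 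As written, your induction does not close.
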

\begin{proof}
Let $s$ be the unique source. We partition the vertex set step by step. For $i\geq 0$, the set $L_i$ contains all sources in $G_i = V(G)\setminus\bigcup_{j < i} L_j$. Since $G$ is acyclic and has only one source all the vertices are in some $L_i$. Let $m$ be the last non-empty $L_i$. See Figure \ref{figureLDacyclic} for a picture.

\begin{figure}[htpb!]
\begin{center}
\scalebox{1}{\begin{tikzpicture}

\path (0,6) node[draw,shape=circle] (s) {s};
\path (0,4) node[draw,shape=circle] (a1) {};
\path (-1,2) node[draw,shape=circle] (b2) {};
\path (0,2) node[draw,shape=circle] (b3) {};
\path (-1,0) node[draw,shape=circle] (c1) {};
\path (0,0) node[draw,shape=circle] (c2) {};
\path (1,0) node[draw,shape=circle] (c3) {};

\draw[rounded corners] (-2.5, 3.5) rectangle (2, 4.5) {};
\draw[rounded corners] (-2.5, 1.5) rectangle (2, 2.5) {};
\draw[rounded corners] (-2.5, -0.5) rectangle (2, 0.5) {};
\draw[rounded corners] (-2.5, 5.5) rectangle (2, 6.5) {};

\path (3,6) node {$L_0$};
\path (3,4) node {$L_1$};
\path (3,2) node {$L_2$};
\path (3,0) node {$L_3$};

\draw[>=latex,->,line width=0.4mm] (s) -- (a1);
\draw[>=latex,->,line width=0.4mm] (a1) -- (b3);
\draw[>=latex,->,line width=0.4mm] (a1) -- (b2);
\draw[>=latex,->,line width=0.4mm] (b2) -- (c1);
\draw[>=latex,->,line width=0.4mm] (b2) -- (c2);
\draw[>=latex,->,line width=0.4mm] (b3) -- (c2);
\draw[>=latex,->,line width=0.4mm] (b3) -- (c3);
\draw[>=latex,->,line width=0.4mm] (s) .. controls +(1,-1) and +(1,1) .. (c3);
\draw[>=latex,->,line width=0.4mm] (s) .. controls +(-1,-1) and +(-1,1) .. (b3);
\draw[>=latex,->,line width=0.4mm] (a1) .. controls +(-1,-1) and +(-1,1) .. (c1);

\end{tikzpicture}}
\end{center}
\caption{The levels in the proof of Theorem \ref{LDacyclic}.}
\label{figureLDacyclic}
\end{figure}
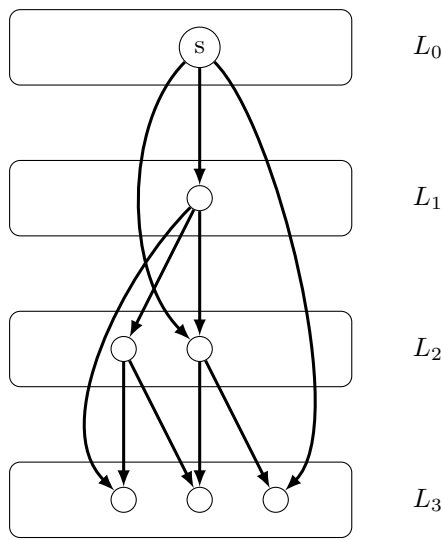

The following claims are a direct consequence of the construction.

\begin{claim}\label{father}
Let $v \in L_i$ ($i>0$). Then $v$ has an in-neighbour in $L_{i-1}$.
\end{claim}
\begin{claim}
There is no arc inside any set $L_i$.
\end{claim}
\begin{claim}
There is no arc from $L_j$ to $L_i$ for $j > i$.
\end{claim}

Now we construct a locating-dominating set $D$ of $G$.
For $i=m, \dots, 1$, we construct step by step subsets $D_i$ of $D$. We will also define $L'_i=L_i\setminus\bigcup_{j>i} D_j$, where at the beginning of the process ($i=m$) we let $L'_m=L_m$.

We will ensure that $D_i\cap L_{i-1}$ dominates $L'_i$, and $D_i$ locates the vertices of $L'_i$.


Let $P_1 ,\ldots , P_k$ be the $L_{i-1}$-partition of $L'_i$. In particular, vertices in this partition are twins with respect to $L_{i-1}$. For each $j$, let $v_j$ be any vertex of $P_j$. By  Bondy's theorem (Theorem~\ref{bondy}), there is a set $S$ of at most $k$ vertices in $L_{i-1}$ that locates and dominates $v_1 ,\ldots, v_k$.

 Again by applying Bondy's theorem, for any $1\leq j\leq k$, there is a set $S_j$ of $|P_j| -1$ vertices in $G$ that locates $P_j$. Note that the vertices of $S_j$ belong to $\bigcup_{j<i-1} L_j$. Let  $D_i=S\cup \bigcup_{j=1}^k S_j$. Then $D_i\cap L_{i-1}=S$ dominates $L'_i$ and $D_i$ locates $L'_i$. 
\begin{center}
$|D_i|\leq k + \sum_{j=1}^{k}{\left(|P_j|-1\right)} = |L'_i|.$
\end{center}

Now we prove that $D=\bigcup_{i=1}^m D_i\cup \{s\}$ is a locating-dominating set. The set $D$ is a dominating set because for any $x$ out of $D$ ($x\in L_i$), $x$ is dominated by  $D_i\cap L_{i-1}$. It is also locating because if there are two vertices $u, v$ in $L_i$, they are located by $D_i$ and if they are from different $L_i$ and $L_j$ for $j<i$, $u\in L_i$ is located by its in-neighbour in $L_{i-1} \cap D_i$.

We now bound the size of $D$. We have $|D_i|\leq |L'_i|$ at each step $i=m, \ldots, 1$ of the construction. Thus, there are at most as many vertices in $D'=\bigcup_{i=1}^m D_i$ as in $V(G)\setminus D'$ ($D_i$ only contains vertices that are in $L_j$ with $j<i$, hence $V(G)\setminus D$ is exactly the set of all vertices in the sets $L'_i$). So, if $D'=D$ (that is, $D'$ contains $s$), we  have $|D|\leq\lfloor\frac{n}{2}\rfloor$. But if $D'$ does not contain $s$, we have in fact that there are at most as many vertices in $D'$ as in $V(G)\setminus (D'\cup\{s\})$. Thus, 
$|D'|\leq\lfloor\frac{n-1}{2}\rfloor$ and 
\begin{align*}
|D|&=|D'|+1 \\
&\leq\left\lfloor\frac{n+1}{2}\right\rfloor \\
&=\left\lceil\frac{n}{2}\right\rceil,
\end{align*}
as wished.
\end{proof}

The bound of Theorem \ref{LDacyclic} is best possible by considering directed paths (the proof of the following is easy, so we omit it).

\begin{proposition}\label{prop:directedpath}
For the directed path $P_n$ of order $n$, we have $\gamma(P_n)=\gamma_L(P_n)=\left\lceil\frac{n}{2}\right\rceil$.
\end{proposition}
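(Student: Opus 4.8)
The plan is to label the vertices of $P_n$ as $v_1,\ldots,v_n$ with an arc from $v_i$ to $v_{i+1}$ for each $1\le i\le n-1$, and to establish the two inequalities $\gamma(P_n)\ge\lceil n/2\rceil$ and $\LD(P_n)\le\lceil n/2\rceil$. Since $\gamma(G)\le\LD(G)$ holds for every digraph $G$, these two bounds together force $\gamma(P_n)=\LD(P_n)=\lceil n/2\rceil$, which is exactly the statement.

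For the upper bound on $\LD(P_n)$, I would exhibit the explicit set $D=\{v_i : i \text{ odd}\}$, which has size $\lceil n/2\rceil$. It is dominating: $v_1\in D$, and every vertex $v_{2k}$ has its unique in-neighbour $v_{2k-1}$ in $D$. It is locating because the vertices outside $D$ are precisely $v_2,v_4,v_6,\ldots$, and within $D$ the vertex $v_{2k}$ has in-neighbour set $\{v_{2k-1}\}$; these singletons are pairwise distinct, so no two vertices of $V(P_n)\setminus D$ share the same set of in-neighbours in $D$. Hence $D$ is locating-dominating and $\LD(P_n)\le\lceil n/2\rceil$.

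For the lower bound on $\gamma(P_n)$, I would note that $v_1$ is a source, hence lies in every dominating set. Then consider the $\lfloor (n-1)/2\rfloor$ pairwise disjoint pairs $\{v_{2j},v_{2j+1}\}$ for $j=1,\ldots,\lfloor (n-1)/2\rfloor$, each of which is also disjoint from $\{v_1\}$. Any dominating set must contain a vertex of each such pair, since otherwise $v_{2j+1}$ would be dominated by neither $v_{2j}$ nor itself. Summing these contributions yields $\gamma(P_n)\ge 1+\lfloor (n-1)/2\rfloor=\lceil n/2\rceil$. Combining this with the previous paragraph and with $\gamma\le\LD$ completes the argument.

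There is essentially no genuine obstacle here; the only points needing a little care are the disjointness of the pairs used in the lower bound (so that the counts add up) and the parity identity $1+\lfloor (n-1)/2\rfloor=\lceil n/2\rceil$, which one checks holds for both parities of $n$. This is consistent with the authors' remark that the proof is easy.
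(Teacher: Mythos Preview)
Your proof is correct. The paper omits the proof entirely (``the proof of the following is easy, so we omit it''), so there is no approach to compare against; your argument via the explicit set $\{v_i : i\text{ odd}\}$ for the upper bound and the disjoint pairs $\{v_{2j},v_{2j+1}\}$ together with the forced source $v_1$ for the lower bound is exactly the kind of routine verification the authors had in mind.
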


\section{Conclusion}\label{sec:conclu}

We conclude the paper with Table~\ref{table}, summarizing the known upper bounds on the domination and location-domination numbers for certain classes of digraphs.

\begin{table}[htpb!]
 \begin{center}
\scalebox{0.9}{\begin{tabular}{c|c|c}
~&~&\\[-0.2cm]
class of digraphs & $\gamma$ & $\LD$ \\ [-0.2cm]
~&~&\\
\hline
\hline
~&~&\\[-0.2cm]
source-free   & $\left\lceil\frac{2n}{3}\right\rceil$ \cite{lee} & $n-1$ [Prop \ref{prop:stars}]\\[-0.1cm]
&  & \\
\hline
~&~&\\[-0.2cm]
twin-free and   & $\left\lceil\frac{2n}{3}\right\rceil$ \cite{lee} & $\leq \frac{4n}{5}$ [Cor \ref{coro:source-free-twin-free}] ($\geq\frac{2(n-2)}{3}$ [Prop \ref{prop:exampleLD=2n/3}]) \\[-0.1cm]
source-free &  & \\
\hline
~&~&\\[-0.2cm]
twin-free, source-free and   & $\left\lceil\frac{2n}{3}\right\rceil$ \cite{lee} & $\leq \frac{3n}{4}$ [Cor \ref{coro:source-free-twin-free}] ($\geq\frac{2(n-2)}{3}$ [Prop \ref{prop:exampleLD=2n/3}]) \\[-0.1cm]
quasi-twin-free &  & \\
\hline
~&~&\\[-0.2cm]
tournaments     & $\left\lceil\log_2 n\right\rceil$ \cite{meggidovishkin} & $\left\lceil\frac{n}{2}\right\rceil$ [Thm~\ref{thm:tournaments-n/2}, Props~\ref{transitivetourn},\ref{prop:tourn-disj-triangles}] \\[-0.1cm]
&  & \\
\hline
~&~&\\[-0.2cm]
acyclic twin-free   & $\left\lceil\frac{n}{2}\right\rceil$ [Thm \ref{LDacyclic}, Prop~\ref{prop:directedpath}] & $\left\lceil\frac{n}{2}\right\rceil$ [Thm \ref{LDacyclic}, Props~\ref{transitivetourn},\ref{prop:directedpath}]\\[-0.1cm]
&  & \\
\hline
~&~&\\[-0.2cm]
strongly connected     & $\left\lceil\frac{n}{2}\right\rceil$ \cite{leethesis} & $n-1$ [Prop \ref{prop:stars}] \\[-0.1cm]
&  & 
\end{tabular}}
\end{center}
\caption{Summary of known upper bounds for domination and location-domination numbers of digraphs, with references. Cells with just one number correspond to tight bounds. For every other cell, we indicate the known upper bound and the size of the largest known construction.}
\label{table}
 \end{table}

The main question arising from our results is whether every twin-free digraph of order $n$ admits a locating-dominating set of size $\frac{2n}{3}$. Also, it would be interesting to determine which are the tournaments and the twin-free acyclic digraphs of order $n$ with location-domination number exactly $\lceil\frac{n}{2}\rceil$.

We also ask whether Proposition~\ref{prop:DS-S/2-parts} could be strengthened in the following sense: is it true that every source-free digraph has a minimum-size dominating set $S$ with $|S|/2$ vertices in $S$ having an $S$-external private neighbour? This would be best possible, and an analogue of a similar result from~\cite{bollobas}, which holds for undirected graphs (with $|S|/2$ replaced with $|S|$). Moreover, we do not know whether Proposition~\ref{prop:DS-S/2-parts} is tight for strongly connected digraphs.

\paragraph{Acknowledgements} We thank the anonymous referees for their detailed comments which helped us to improve the presentation of the paper.


\begin{thebibliography}{11}


\bibitem{bollobas} B. Bollob\'as and E. J. Cockayne. Graph-theoretic parameters concerning domination, independence, and irredundance. \emph{Journal of Graph Theory} 3:241--249,1979.

\bibitem{bondy} J. A. Bondy. Induced subsets. \emph{Journal of Combinatorial Theory, Series B} 12:201--202, 1972.

\bibitem{CHL02} I.~Charon, O.~Hudry and A.~Lobstein. Identifying and locating-dominating codes: {NP}-completeness results for directed graphs. \emph{IEEE Transactions on Information Theory} 48(8):2192--2200, 2002.
  
\bibitem{linegraph} F. Foucaud and M. A. Henning. Location-domination in line graphs. \emph{Discrete Mathematics} 340:3140--3153, 2017.

\bibitem{henning} F. Foucaud and M. A. Henning. Location-domination and matching in cubic graphs. \emph{Discrete Mathematics} 339:1221--1231, 2016.

\bibitem{heia} F. Foucaud, M. A. Henning, C. L\"owenstein and T. Sasse. Locating-dominating sets in twin-free graphs. \emph{Discrete Applied Mathematics} 200:52--58:2016.

\bibitem{IDcodes} F. Foucaud, R. Naserasr and A. Parreau. Characterizing extremal digraphs for identifying codes and extremal cases of Bondy's theorem on induced subsets. \emph{Graphs and Combinatorics} 29(3):463--473, 2013.

\bibitem{fu} Y. Fu. Dominating set and converse dominating set of a directed graph. \emph{American Mathematical Monthly} 75:861--863, 1968.

\bibitem{Garijo} D. Garijo, A. Gonz\'alez and A. M\'arquez. The difference between the metric dimension and the determining number of a graph. \emph{Applied Mathematics and Computation} 249:487--501, 2014.

\bibitem{book} T. W. Haynes, S. T. Hedetniemi and P. J. Slater. \emph{Fundamentals of domination in graphs}. Marcel Dekker, Inc., New York, 1998.

\bibitem{book2} T. W. Haynes, S. T. Hedetniemi and P. J. Slater (editors), \emph{Domination in graphs: advanced topics}, Marcel Dekker, Inc., New York, 1998.


\bibitem{Shahrzad} S. Heydarshahi. \emph{Location-domination in twin-free graphs and digraphs}. Master thesis, Universit\'e Paris Sorbonne, 2018.

\bibitem{leethesis} C. Lee. \emph{On the domination number of a digraphs}. PhD thesis, Michigan State University, 1994.

\bibitem{lee} C. Lee. Domination in digraphs. \emph{Journal of Korean Mathematical Society} 4:843--853, 1998.

\bibitem{meggidovishkin} N. Meggido and U. Vishkin. On finding a minimum dominating set in a tournament. \emph{Theoretical Computer Science} 61:307--316, 1988.

\bibitem{ore} O. Ore. Theory of graphs. \emph{American Mathematical Society Colloquium} 38:206--212, 1962.


\bibitem{rs} D. F. Rall and P. J. Slater. On location-domination numbers for certain classes of graphs. \emph{Congressus Numerantium} 45:97--106, 1984.

\bibitem{s2} P. J. Slater. Domination and location in acyclic graphs. \emph{Networks} 17(1):55--64, 1987.

\bibitem{slater} P. J. Slater. Dominating and reference sets in graphs. \emph{Journal of Mathematical and Physical Sciences} 22(4):445--455, 1988.

\end{thebibliography}
\end{document}